\documentclass[a4paper, 11pt]{amsart}
\usepackage{dsfont}
\usepackage{amssymb}
\usepackage{amsfonts}
\usepackage{amsthm}
\usepackage{amsmath}
\usepackage{verbatim}
\usepackage{a4wide}
\usepackage{bm}
\usepackage{color}
\usepackage{appendix}
\usepackage{amssymb}
\usepackage{bbm}
\usepackage{fancybox}
\usepackage{fancyhdr}
\usepackage{graphicx}
\usepackage{subfigure}

\newtheorem{lemma}{Lemma}[section]

\theoremstyle{definition}

\theoremstyle{definition}
\newtheorem{remark}[lemma]{Remark}
\theoremstyle{definition}

{\catcode `\@=11 \global\let\AddToReset=\@addtoreset}
\AddToReset{equation}{section}

\newcommand{\vphi}{{\varphi }}
\newcommand{\veps}{{\varepsilon }}

\newcommand{\bv}{\mathbf v}

\newcommand{\iij}{{i,j}}

\newcommand{\ipj}{{i+1,j}}
\newcommand{\ijp}{{i,j+1}}

\newcommand{\imj}{{i-1,j}}
\newcommand{\ijm}{{i,j-1}}

\newcommand{\nij}{{0,i,j}}

\newcommand{\nipj}{{0,i+1,j}}
\newcommand{\nijp}{{0,i,j+1}}

\newcommand{\nimj}{{0,i-1,j}}
\newcommand{\nijm}{{0,i,j-1}}

\newcommand{\np}{{n+1}}

\newcommand{\R}{\mathbb{R}}

\newcommand{\diver}{{\rm div}}

\newcommand{\bnu}{\bm \nu}

\newcommand{\del}{\partial}


\begin{document}

\title[Asymptotic Preserving Scheme for the Euler-Korteweg Model]
{Low Mach Asymptotic Preserving Scheme for the Euler-Korteweg Model}

\author[J.~Giesselmann] {Jan Giesselmann}

\address{Weierstrass Institute for applied analysis and Stochastics, Mohrenstr. 39, 10117 Berlin, Germany}\email{jan.giesselmann@wias-berlin.de}

\subjclass[2010]{65M06, 65M12, 76T10}

\keywords{Multi-phase flows, phase transition, all-speed scheme, asymptotic preserving, low Mach number flow, finite difference scheme
 }

\thanks{The author would like to thank Christian Rohde (University of Stuttgart) for suggesting this topic. This work was supported by the EU FP7-REGPOT project 'Archimedes Center for
Modeling, Analysis and Computation' and by the German Research Foundation (DFG) via financial support of the project
`Modeling and sharp interface limits of local and non-local generalized
  Navier-Stokes-Korteweg Systems'.}

\begin{abstract}
We present an all speed scheme for the Euler-Korteweg model.
We study a semi-implicit time-discretisation which treats the  terms, which are stiff for low Mach numbers, implicitly and thereby avoids a dependence of the timestep restriction on the Mach number.
Based on this we present a fully discrete finite difference scheme.
In particular, the scheme is asymptotic preserving, i.e., it converges to a stable discretisation of the incompressible limit 
of the Euler-Korteweg model when the Mach number tends to zero.
\end{abstract}

\maketitle
\thispagestyle{empty}

\section{Introduction}
This work is concerned with the numerical simulation of compressible multi-phase flows 
 via a phase field approach. Specifically we consider the isothermal Euler-Korteweg (EK) model, see e.g. 
 \cite{BDD07, BDDJ07}, which allows for mass fluxes across the interface.
This model is a diffuse interface model solving one set of partial differential equations (PDEs) on the whole computational domain.
The solution of this PDE system already contains the position of the phase boundary.
There are several works on numerical methods for the Euler-Korteweg and the Navier-Stokes-Korteweg equations, see \cite{Die07,BP13,GMP13}. 
 All these works focus on stability properties of their respective schemes. 
In particular, in \cite{BP13,GMP13}, fully implicit time discretisations are used to obtain stable schemes.
We aim at constructing a scheme which is computationally faster as it  needs to solve only one implicit equation per timestep 
while still being stable for reasonable timestep sizes independent of the 
Mach number.

For background on asymptotic preserving and all-speed schemes let us refer to the further developed case of single phase flows
while we like to stress that all speed schemes are of particular importance  in multi-phase flows due to the different speeds of sound in both phases.
However, unsteady compressible flows with small or strongly varying Mach number occur in many physical and engineering applications, 
and are not limited to multi-phase phenomena.

While the general development of shock capturing schemes for compressible flows is quite mature, these schemes
encounter severe restrictions in case of low Mach flows. 
These problems are due to the speed of acoustic waves being much larger than the speed of the flow. 
In fact, explicit-in-time shock capturing schemes need to satisfy a Courant-Friedrichs-Levy (CFL) timestep restriction in order to be stable.
This condition states that the maximal timestep is inversely proportional to the maximal wave speed which scales with the reciprocal of the Mach number.
In addition, these schemes also need artificial dissipation proportional to the maximal wave speed. Therefore, for small Mach numbers, 
 the spatial resolution has to be 
very high to ensure that the solution is not dominated by artificial viscosity.
There have been many contributions concerning \emph{all speed schemes} for compressible flows, i.e., schemes which work well for space and time discretisations
independent of the Mach number. Different approaches for the
isentropic Euler equations can be found in,  \cite[e.g.]{DT11,HJL12,PM05,VVW03}.

The scheme at hand is based in the asymptotic preserving (AP) methodology. For a family of models $M^\veps$ converging to a limit model  
$M^0$ for $\veps \rightarrow 0$
this methodology 
consists in constructing discretisations $M^\veps_{\Delta}$ of $M^\veps$ such that for fixed discretisation parameter $\Delta$ the limit
$\lim_{\veps \rightarrow 0} M^\veps_\Delta$ is a stable and consistent discretisation of $M^0.$
Since the fundamental works \cite{Jin95,JL96}  asymptotic preserving schemes have been the topic of many studies in computational fluid dynamics, 
in recent years, see \cite{AABCP11,BLT13,BD06,CCGPS10,DET08} and references therein. In particular, the algorithm presented here is inspired by \cite{DT11}.

To be more precise let us introduce the model under consideration:
On some space--time domain $\Omega \times (0,T)$ with $T>0$ and $\Omega \subset \R^d$ open and bounded with Lipschitz--boundary
 we study the following balance laws for the density $\rho$ and the velocity $\bv:$
\begin{equation}\label{eq:EK}
 \begin{split}
  \rho_t + \diver (\rho \bv) &=0\\
(\rho \bv)_t + \diver( \rho \bv \otimes \bv) + \frac{1}{M^2}\nabla p(\rho) &= 
\frac{\gamma}{M^2} \diver \Big(\Big(\rho \Delta \rho + \frac{1}{2} |\nabla \rho|^2\Big) {\bf I} - \nabla \rho \otimes \nabla \rho \Big)
\end{split}
\end{equation}
where $M>0$ is the Mach number, $\gamma>0$ is a capillarity coefficient, $p=p(\rho)$ 
is a (normalised) non-monotone pressure function and ${\bf I}\in \R^{d\times d} $ is the  identity matrix.
We like to stress that this paper addresses  the low Mach number limit, i.e., $M \rightarrow 0,$ not the
sharp interface limit, i.e., $\gamma \rightarrow 0.$
Thus, we assume $\gamma$ to be small but fixed. We complement \eqref{eq:EK} with  initial data
\begin{equation}
 \rho(\cdot,0) = \bar \rho , \quad  \bv(\cdot,0) = \bar \bv 
\end{equation}
and boundary data
\begin{equation}\label{eq:boundary}
 \bv= 0, \quad \nabla \rho \cdot \bnu   =0  \quad \text{on } \del \Omega \times (0,T),
\end{equation}
 where $\bnu$ denotes the outward pointing unit normal vector to $\del \Omega$.
A consequence of these boundary conditions is the global conservation of mass
\[ \frac{d}{dt} \int_\Omega \rho(\cdot,t) \operatorname{d}{\bf x} =0.\] 

For sufficiently smooth initial data equation \eqref{eq:EK} has solutions with $\rho \in L^2((0,T),H^1(\Omega)),$ i.e., no shocks appear.
Essentially, this is due to the energy dissipation equality, see \eqref{eq:energy} below. For details concerning  well-posedness and regularity of solutions
we refer to \cite{BDD07}. 
In this work we restrict our attention to solutions of \eqref{eq:EK} which do not develop shocks.
We aim at constructing a scheme having the following properties:
\begin{itemize}
 \item Conservation of mass, see Remark \ref{rem:masscons}.
 \item Asymptotic preservation, i.e., it converges to the right limit for $M \rightarrow 0,$ see Lemma \ref{lem:fd:divergence} and \eqref{f:rhovnull}.
 \item Stability of the scheme in the low Mach limit, see Lemma \ref{lem:fd:stab}.
 \item Stability of the scheme for generic Mach numbers, see Lemma \ref{lem:fdns}.
 \item No timestep restriction involving $M$.
\end{itemize}

\begin{remark}[Momentum balance]
 While conservation of momentum would also be a desirable property of our scheme, it seems to be incompatible with the other properties we pursue, see \cite{GMP13}
for more details on this issue.
As there are no shocks it seems acceptable not to enforce conservation of momentum.
For a detailed exposition of the problems which may be caused by the use of nonconservative schemes we refer the reader to \cite{HL94}.
\end{remark}

\begin{remark}[Extension to the Navier-Stokes-Korteweg system]
 The scheme presented here will be easily extendable to the Navier-Stokes-Korteweg (NSK) system in case of small Reynolds numbers. 
The NSK system is obtained from  \eqref{eq:EK}  by
including a  viscosity term 
in the momentum balance.
This is elaborated upon in Remark \ref{rem:NSK}.
\end{remark}

Let us note that the phases (liquid/vapour)  can be identified with the density values for which 
$p'(\rho)>0$, cf., Figure \ref{fig:pressure}.
In addition, the pressure is related to the Helmholtz free energy density $W=W(\rho)$ via the Gibbs-Duhem equation
\begin{equation}\label{eq:GibbsDuhem}
 p(\rho) := \rho  W'(\rho) - W(\rho),\  \text{ in particular, } p'(\rho)=\rho W''(\rho).
\end{equation}
\begin{figure}[h]
\begin{center}
 \includegraphics[width=3cm]{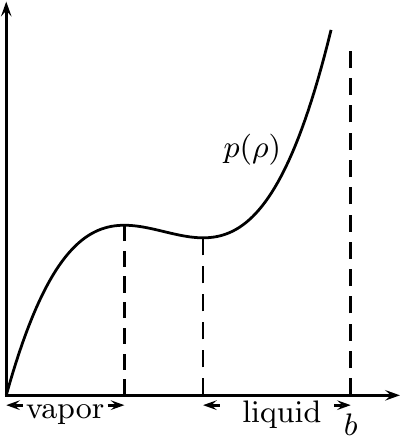},\hspace{2cm} \includegraphics[width=3cm]{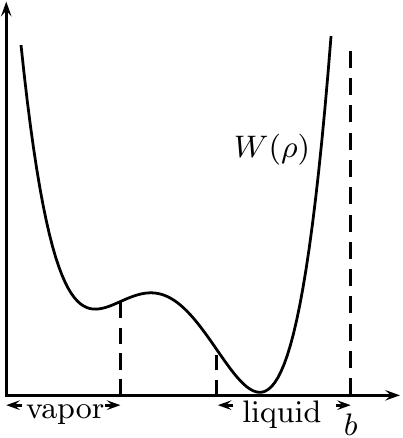}
\end{center}
\caption{
 Sketch of energy density and pressure.}
\label{fig:pressure}
\end{figure}
\noindent The non-monotone pressure and non-convex local part of the energy density are key features of the multi-phase character of the problem at hand.
They make the first order part of \eqref{eq:EK} hyperbolic-elliptic and make the well-posedness analysis as well as the construction of stable numerical schemes
rather involved. 
To be precise, we assume $W \in C^2 ((0,\infty),[0,\infty))$ and that there exist $0< \alpha < \beta< \infty$ such that
$W$ is strictly convex on $(0,\alpha) \cup (\beta,\infty)$ and strictly concave on $(\alpha,\beta).$
As
\begin{equation}\label{eq:Korteweg}
 \diver \left(\left(\rho \Delta \rho + \frac{1}{2} |\nabla \rho|^2\right) {\bf I} - \nabla \rho \otimes \nabla \rho \right) = \rho \nabla \Delta \rho
\end{equation}
it is straightforward to rewrite \eqref{eq:EK}, by introducing an auxiliary variable $\Lambda$, as
\begin{equation}\label{eq:EK2}
 \begin{split}
  \rho_t + \diver (\rho \bv) &=0\\
(\rho \bv)_t + \diver( \rho \bv \otimes \bv) + \frac{1}{M^2}\rho \nabla \Lambda &=0\\
  W'(\rho) - \gamma \Delta \rho - \Lambda &= 0 .
\end{split}
\end{equation}

Moreover, it is classical to check that energy is conserved for strong solutions of \eqref{eq:EK} equipped with boundary conditions \eqref{eq:boundary}, see 
\cite[e.g.]{GMP13}, i.e.,
\begin{equation}\label{eq:energy}
 \frac{d}{dt} \int_\Omega \frac{1}{M^2} \left( W(\rho) + \frac{\gamma}{2}|\nabla \rho|^2 \right) +\frac{1}{2} \rho |\bv|^2 \, \operatorname{d}{\bf x}  = 0.
\end{equation}
The non-local (gradient) term in the energy is responsible for including surface tension (capillary) effects in the model.
It also makes the interface smeared out and  thereby prevents the formation of shocks.
It follows from $\Gamma$-limit arguments that the thickness of the interfacial layer is proportional to $\sqrt{\gamma},$
see \cite[e.g.]{ORS90}.
The Euler-Lagrange equations  of the energy from \eqref{eq:energy} with a prescribed mass constraint are
\begin{equation}
 \lambda = \frac{1}{M^2} \left( W'(\rho) - \gamma \Delta \rho \right) , \quad  \bv= {\bf 0},
\end{equation}
where $\lambda \in \R$ is the Lagrange multiplier associated to the prescribed mass constraint.


The outline of the remainder of this paper is as follows: In
\S 2 we study  a formal  low Mach limit of the EK equations.
\S 3 is devoted to investigating a semi-discretisation in time.
This semi-discretisation is the basis of the fully discrete scheme which is stated and studied in \S 4.
To conclude, we present numerical experiments in \S 5.

\section{Low Mach Limit}\label{sec:lm}
While the low Mach limits of the Euler and Navier-Stokes equations have been rigorously studied in \cite[e.g.]{KM81,KM82,FN07}, less is known for the case of 
the hyperbolic-elliptic system with dispersion at hand. For a combined low Mach and sharp interface limit see \cite{HKK11}.
As the interest of this study is mainly numerical we 
consider a formal low Mach limit of \eqref{eq:EK2} in this section.
To this end, we assume expansions of all quantities in $M^2$
\begin{equation}\label{eq:expansions}
 \rho= \rho_0 + M^2 \rho_1 + o(M^2), \quad \Lambda=\Lambda_0 + M^2 \Lambda_1 + o(M^2) , \quad \bv = \bv_0 + o(1)
\end{equation}
where we assume $\rho_0,\rho_1,\bv_0,\Lambda_0,\Lambda_1$ to be sufficiently smooth for the subsequent calculations to make sense and $\rho_0>0$.
We assume the mass inside $\Omega$ to be prescribed independent of $M$ and thus
\[ \rho_1(\cdot,t) \in  H^1_m(\Omega):= \{\varphi \in H^1(\Omega) \, : \, \int_\Omega \varphi \, \operatorname{d}{\bf x} =0\}\]
for all $t \in (0,T).$ We impose the boundary conditions \eqref{eq:boundary} such that $\del_t \rho_0(\cdot,t) \in H^1_m(\Omega).$
By inserting \eqref{eq:expansions} into \eqref{eq:EK2} we immediately obtain
\begin{equation}
 \label{eq:consistency}
\Lambda_0 = W'(\rho_0) - \gamma \Delta \rho_0  = \text{const}.
\end{equation}
This is the leading order Euler-Lagrange equation.
In particular, \eqref{eq:consistency} holds for $t=0$ such that
the initial data need to be some extremum of the energy functional. We will only consider the more restrictive situation that 
for $\rho = \bar \rho_0$, i.e., the zeroth order of the initial data,  the bilinear form
\begin{equation}\label{bilinear}B_\rho :  H^1_m(\Omega) \times H^1_m(\Omega) \rightarrow \R, \quad  (\varphi,\psi) \mapsto \int_\Omega W''(\rho) \varphi \psi + \gamma \nabla \varphi \nabla \psi \, \operatorname{d}{\bf x}
\end{equation}
is coercive.

This condition, in particular, implies that $\bar \rho_0$ is an isolated minimiser of the leading order energy.
Computing the derivative of \eqref{eq:consistency} with respect to $t$  we find
\begin{equation}\label{eq:tderivative}
  W''(\rho_0) \del_t \rho_{0} - \gamma \del_t\Delta \rho_{0}= \text{const}.
\end{equation}
By continuity considerations we see that $B_\rho$ remains coercive for $\rho$ sufficiently near to $\bar \rho_0$ and 
$\rho_{0}(\cdot,t)$ is arbitrarily near to $\bar \rho_0$ for $t$ small enough.
Moreover, $\del_t \rho_0 \in H^1_m(\Omega)$, 
thus, \eqref{eq:tderivative} is uniquely solvable for small $t$ and the unique solution is $\del_t\rho_{0}=0.$
Via a continuation argument we get $\rho_0(\cdot,t)=\bar \rho_0$ for all $t \in [0,T).$

\noindent
Inserting $\del_t \rho_{0}= 0$ into \eqref{eq:EK2}$_1$ we infer that the leading order momentum is solenoidal, i.e.,
\begin{equation}\label{eq:divconstraint}
 \diver(\rho_0 \bv_0)=0.
\end{equation}
The low Mach limit is closed by the evolution equation for $\bv_0$ which reads
\begin{equation}\label{eq:vevolution}
 \rho_0 (\bv_0)_t + \rho_0 (\bv_0 \cdot \nabla ) \bv_0 + \rho_0 \nabla \Lambda_1 = 0,
\end{equation}
where $\Lambda_1$ can be determined via the  elliptic  (as $\rho_0>0$) equation
\begin{equation}
 - \diver (\rho_0 \nabla \Lambda_1) = \diver (\rho_0 (\bv_0 \cdot \nabla ) \bv_0)  
\end{equation}
such that it enforces the constraint \eqref{eq:divconstraint}.
Let us note that the role of the  chemical potential $\Lambda$ changed in the limit process.
 In the compressible case $\Lambda$ is given by a constitutive relation.
In the incompressible case $\Lambda$ is decomposed into a (fixed) background state $\Lambda_0$ given by the constitutive law and a Lagrange multiplier $\Lambda_1$.
To finish this section we state a stability result for the leading order velocity, which we will aim to recover as an inequality in the discrete setting.
\begin{lemma}[Conservation of kinetic energy]\label{lem:LMstab}
For a given $\rho_0\in H^1(\Omega)$ let $(\bv_0,\Lambda_1)$ be a strong solution of \eqref{eq:divconstraint}, \eqref{eq:vevolution} 
with $\bv_0|_{\del \Omega}= {\bf 0}$. Then,
\[\frac{d}{dt} \int_\Omega \rho_0 |\bv_0|^2 \, \operatorname{d}{\bf x} =0.\]
\end{lemma}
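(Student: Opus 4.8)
The plan is to carry out the natural kinetic-energy estimate, exploiting that $\rho_0$ is independent of time. Since $\rho_0$ does not depend on $t$, I would first differentiate under the integral sign and apply the chain rule to obtain
\[
\frac{d}{dt}\int_\Omega \rho_0 |\bv_0|^2 \, \operatorname{d}{\bf x} = 2\int_\Omega \rho_0\, \bv_0\cdot (\bv_0)_t \, \operatorname{d}{\bf x}.
\]
I would then substitute the evolution equation \eqref{eq:vevolution} in the form $\rho_0 (\bv_0)_t = -\rho_0 (\bv_0\cdot\nabla)\bv_0 - \rho_0\nabla\Lambda_1$, which splits the right-hand side into a convective contribution and a Lagrange-multiplier contribution, and it remains to show that each integrates to zero.

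For the Lagrange-multiplier term I would integrate by parts to move the gradient off $\Lambda_1$, giving
\[
\int_\Omega \rho_0\, \bv_0\cdot\nabla\Lambda_1 \, \operatorname{d}{\bf x} = -\int_\Omega \Lambda_1 \diver(\rho_0\bv_0)\, \operatorname{d}{\bf x} + \int_{\del\Omega}\Lambda_1 \rho_0\, \bv_0\cdot\bnu \, \operatorname{d}S,
\]
where the volume integral vanishes by the incompressibility constraint \eqref{eq:divconstraint} and the boundary integral vanishes because $\bv_0|_{\del\Omega}=\mathbf{0}$. For the convective term the key algebraic step is the identity $\bv_0\cdot(\bv_0\cdot\nabla)\bv_0 = \tfrac12 \bv_0\cdot\nabla|\bv_0|^2$, after which
\[
\int_\Omega \rho_0\, \bv_0\cdot(\bv_0\cdot\nabla)\bv_0\, \operatorname{d}{\bf x} = \tfrac12\int_\Omega \rho_0\, \bv_0\cdot\nabla|\bv_0|^2\, \operatorname{d}{\bf x} = -\tfrac12\int_\Omega |\bv_0|^2 \diver(\rho_0\bv_0)\, \operatorname{d}{\bf x} + \tfrac12\int_{\del\Omega}|\bv_0|^2\rho_0\,\bv_0\cdot\bnu\, \operatorname{d}S,
\]
and both terms again vanish by \eqref{eq:divconstraint} and the no-slip boundary condition. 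Collecting the two computations yields the claim.

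I do not expect a genuine obstacle here: the argument is the standard energy identity for variable-density incompressible flow, and every term is dispatched by the same two ingredients, namely the solenoidality of $\rho_0\bv_0$ and the vanishing of $\bv_0$ on $\del\Omega$. The only point requiring mild care is regularity — the differentiation under the integral and the two integrations by parts must be justified — but this is guaranteed by the hypothesis that $(\bv_0,\Lambda_1)$ is a strong solution together with $\rho_0\in H^1(\Omega)$. This identity is precisely the continuous analogue of the discrete stability we will aim to reproduce, which is why it is worth isolating the two structural facts ($\diver(\rho_0\bv_0)=0$ and the homogeneous boundary data) that drive it.
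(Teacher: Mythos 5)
Your argument is correct and coincides with the paper's own proof: both test \eqref{eq:vevolution} with $\bv_0$, use $\del_t\rho_0=0$ and the identity $\bv_0\cdot(\bv_0\cdot\nabla)\bv_0=\tfrac12\bv_0\cdot\nabla|\bv_0|^2$, and then dispatch both remaining terms by integration by parts together with \eqref{eq:divconstraint} and the no-slip boundary condition. You merely spell out the integrations by parts that the paper leaves to the reader.
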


\begin{proof}
 Multiplying \eqref{eq:vevolution} by $\bv_0$ and integrating over $\Omega$ we obtain because of $\del_t \rho_{0}=0$
\begin{equation}\label{LM:cont:stab}
\frac{1}{2}\frac{d}{dt} \int_\Omega \rho_0   |\bv_0|^2\, \operatorname{d}{\bf x}  =
 - \int_\Omega \frac{1}{2} \rho_0 \bv_0 \cdot \nabla (|\bv_0|^2) + \rho_0 \bv_0 \cdot \nabla \Lambda_1 \, \operatorname{d}{\bf x}.
\end{equation}
 The assertion of the Lemma follows from \eqref{LM:cont:stab} using  integration by parts, \eqref{eq:divconstraint} and the boundary conditions.
\end{proof}
\begin{remark}[Expansion of the energy]
 Due to \eqref{eq:consistency}, \eqref{eq:boundary} and $\rho_1(\cdot,t) \in H^1_m(\Omega)$ we have
\begin{multline} \int_\Omega \frac{1}{M^2} ( W(\rho) + \frac{\gamma}{2} |\nabla \rho|^2) + \frac{1}{2} \rho |\bv|^2 \, \operatorname{d}{\bf x}\\
 = \int_\Omega \frac{1}{M^2} ( W(\rho_0) + \frac{\gamma}{2} |\nabla \rho_0|^2 )\, \operatorname{d}{\bf x} + 
\int_\Omega  \frac{1}{2} \rho_0 |\bv_0|^2 \, \operatorname{d}{\bf x} + \mathcal{O}(M^2),
\end{multline}
such that at least formally
\[ \frac{d}{dt} \int_\Omega \frac{1}{M^2} ( W(\rho) + \frac{\gamma}{2} |\nabla \rho|^2) + \frac{1}{2} \rho |\bv|^2 \, \operatorname{d}{\bf x}
 = \frac{d}{dt} \int_\Omega  \frac{1}{2} \rho_0 |\bv_0|^2 \, \operatorname{d}{\bf x} + \mathcal{O}(M^2),
\]
i.e., the leading order rate (of change) of the total energy is the rate of the kinetic energy.
\end{remark}


\section{A semi-discrete scheme}\label{sec:sd}
In this section we describe and investigate a semi-discretisation in time of \eqref{eq:EK} which can be used together with any space discretisation approach.
We show that this scheme converges to a stable discretisation of the incompressible problem determined in \S \ref{sec:lm} for fixed timestep sizes and $M \rightarrow 0$.
\subsection{Semi-discretisation in time}
The discretisation described here is inspired by the scheme for the compressible isothermal Euler equations in \cite{DT11}, where an elliptic equation for 
$\rho^\np$ and an explicit equation for $\bv^\np$ are derived.
Our generalisation of this approach leads to a Cahn-Hilliard like equation for $\rho;$ the discretisation of which might reintroduce an order 
$\mathcal{O}(M)$ 
timestep restriction, see \cite{BBG99}. 
To avoid such a constraint we decompose the double well potential $W$ as the difference of two convex $C^2$--functions $U,V$
\[ W(\rho)= U(\rho) - V(\rho).\]
We assume that $V'' (\rho) \geq \kappa_V >0$ for all $\rho >0$.
We subdivide the time interval $[0,T]$  into a partition of $N$
consecutive adjacent subintervals whose endpoints are denoted
$t_0=0<t_1<\ldots<t_{N}=T$. The $n$-th timestep is denoted $\tau_n=t_{n+1}-t_n$ and $\tau:=\max_{n=1,\dots,N} \tau_n.$   We will consistently use the shorthand
$F^n(\cdot):=F(\cdot,t_n)$ for a generic timedependent function $F$.
We propose the following time discretisation:
\begin{equation}\label{sch:EK1}
\begin{split}
 \rho^{n+1}-\rho^n + \tau_n \diver ((\rho\bv)^{n+1}) &=0 \\
(\rho \bv)^{n+1} - ( \rho \bv)^n + \tau_n \diver(\rho^n \bv^n \otimes \bv^n) + \frac{\tau_n}{M^2} \rho^n \nabla \Lambda^{n+1}
- \tau_n \mu_h \Delta \bv^n  &={\bf 0} \\
U'(\rho^{n+1})-V'(\rho^n) - \gamma \Delta \rho^{n+1}-  \Lambda^{n+1} &=0,
\end{split}
\end{equation}
where we require
\begin{equation}\label{sd:bcv}
 \nabla \rho^{n+1} \cdot \bnu =0, \quad \bv^{n+1}={\bf 0} \text{ on } \del \Omega
\end{equation}
and choose $\rho^0=\bar \rho,\, \bv^0=\bar \bv.$
Moreover, $\mu_h$ is an artificial viscosity coefficient. 
The idea of decomposing the energy into a part which is treated explicitly and a part which is treated implicitly can already be found in 
\cite{Eyr98,VLR03}.
Note that \eqref{sd:bcv} implies $\rho^\np - \rho^n \in H^1_m(\Omega).$

\noindent
In order to show that one timestep of \eqref{sch:EK1} can be decomposed into an implicit equation determining $\rho^\np,\, \Lambda^\np$ and an explicit expression for
$\bv^\np$ we insert the expression for 
$(\rho\bv)^{n+1}$ from \eqref{sch:EK1}$_2$ into \eqref{sch:EK1}$_1$ which yields
\begin{equation}\label{sch:EK2}
\begin{split}
 \rho^{n+1}-\rho^n + \tau_n \diver (  - \frac{\tau_n}{M^2} \rho^n \nabla \Lambda^{n+1}) &=
\Phi^n\\
U'(\rho^{n+1})-V'(\rho^n) - \gamma \Delta \rho^{n+1}-  \Lambda^{n+1} &=0,
\end{split}
\end{equation}
where
\[ \Phi^n := \tau_n \diver ( - ( \rho \bv)^n + \tau_n \diver(\rho^n \bv^n \otimes \bv^n) - \tau_n \mu_h \Delta \bv^n).\]
In this way we (implicitly) introduce higher (4th) order derivatives which require an additional boundary condition.
We introduce the following artificial boundary condition
\begin{equation}
 \label{bc:art}\nabla \Lambda^\np \cdot \bnu =0 \quad \text{on } \del \Omega 
\end{equation}
 which seems natural as \eqref{sch:EK2}  resembles one timestep in a semi-discretised Cahn-Hilliard equation with density dependent mobility. 
It is important to note that due to the explicit discretisation of the concave part of $W$ we get an elliptic system. 
An alternative would be to use a discretisation like in \cite{BBG99}. In that case we would need to choose the 
the parameter $\theta_c$ (introduced in \cite{BBG99}) carefully in order to avoid 
a timestep restriction of the form $\tau_n \lesssim M$, see \cite[Thrm. 2.1]{BBG99}.

Due to our discretisation of the double--well potential we have an elliptic problem for $(\rho^{n+1},\Lambda^{n+1})$, i.e., \eqref{sch:EK2},
 and $\bv^{n+1}$ is explicitly given by \eqref{sch:EK1}$_2$.
We will not investigate the well-posedness of \eqref{sch:EK2} here, but study it in the fully discrete case, see Lemma \ref{lem:wpscheme}.

\begin{remark}[Extension to NSK]\label{rem:NSK}
 The discretisation given in \eqref{sch:EK2} is easily extendable to the isothermal, compressible Navier-Stokes-Korteweg system, by substituting the
artificial viscosity $\mu_h$ by the physical viscosity or the reciprocal of the Reynolds number, in a non-dimensionalised setting.
Similarly $\Delta \bv^n$ might be replaced by $\diver ({\boldsymbol \sigma}_{NS}^n)$, where ${\boldsymbol \sigma}_{NS}$ denotes the full
Navier-Stokes stress tensor.
The explicit treatment of the viscous term is particularly adequate for high Reynolds numbers, see \cite{EL96}.
An implicit treatment of the viscosity is not possible in our framework as it would make the right hand side of \eqref{sch:EK2}$_1$ depend on $\bv^\np.$
\end{remark}

\subsection{The low Mach number limit}
Assuming the well--posedness of the scheme, we study its behaviour for $M \rightarrow 0$.
To this end, we assume the following expansions of the fields in $M^2$ for every $n \in \{0,\dots,N\}$
\begin{equation}\label{eq:num_expansion}
 \rho^n =\rho^n_0 + M^2 \rho_1^n + o(M^2), \quad  \Lambda^n =\Lambda^n_0 + M^2 \Lambda_1^n + o(M^2), \quad \bv^n = \bv_0^n + o(1),
\end{equation}
and compatibility of the initial data with the compatibility constraints, i.e.,
\begin{equation}\label{hsd}
 W'(\rho_0^0) - \gamma \Delta \rho_0^0 = \text{const},\quad
\diver(\rho_0^0 \bv_0^0)=0,\quad
\rho_0^0>0.
\tag{H$_{sd}$}
\end{equation}

\begin{lemma}[Semi-discrete AP property]\label{lem:sd:rhoconst}
Provided the solution of \eqref{sch:EK2}, \eqref{sch:EK1}$_2$ satisfies the expansion \eqref{eq:num_expansion} and the initial data fulfil \eqref{hsd}, then
\[ \rho_0^n =\rho_0^0 \quad  \text{ and } \quad  \diver(\rho_0^n \bv_0^n)=0  \text{ for all } n  \in \{0,\dots,N\}.\]
\end{lemma}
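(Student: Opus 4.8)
The plan is to argue by induction on $n$, reproducing at the discrete level the mechanism of \S\ref{sec:lm}: the $\mathcal{O}(M^{-2})$ part of the scheme forces the leading order potential $\Lambda_0^\np$ to be spatially constant, which together with the leading order Euler--Lagrange relation pins down $\rho_0^\np$, and the continuity equation then delivers the divergence constraint. The base case $n=0$ is immediate, since $\rho_0^0=\rho_0^0$ trivially and $\diver(\rho_0^0\bv_0^0)=0$ is precisely \eqref{hsd}. So I would assume $\rho_0^n=\rho_0^0$ and $\diver(\rho_0^n\bv_0^n)=0$ and establish the same two statements at level $n+1$.

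First I would insert the expansions \eqref{eq:num_expansion} into \eqref{sch:EK2}$_1$ and collect the most singular order. Since $\rho^\np-\rho^n$ and $\Phi^n$ are $\mathcal{O}(1)$ (all quantities entering $\Phi^n$ are), multiplying by $M^2$ and letting $M\to0$ leaves only the capillary--pressure flux, giving $\diver(\rho_0^n\nabla\Lambda_0^\np)=0$. Testing this identity with $\Lambda_0^\np$, integrating by parts and using the artificial boundary condition \eqref{bc:art} at leading order together with the induction hypothesis $\rho_0^n=\rho_0^0>0$, I obtain $\int_\Omega\rho_0^n|\nabla\Lambda_0^\np|^2\,\operatorname{d}{\bf x}=0$, whence $\nabla\Lambda_0^\np=0$, i.e. $\Lambda_0^\np$ is a spatial constant.

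Next I would read off the $\mathcal{O}(1)$ part of \eqref{sch:EK2}$_2$, namely $U'(\rho_0^\np)-V'(\rho_0^n)-\gamma\Delta\rho_0^\np=\Lambda_0^\np=\mathrm{const}$, and subtract from it the relation $U'(\rho_0^n)-V'(\rho_0^n)-\gamma\Delta\rho_0^n=\mathrm{const}$ obtained from \eqref{hsd} and the induction hypothesis $\rho_0^n=\rho_0^0$ (recall $W=U-V$). Writing $w:=\rho_0^\np-\rho_0^n$, the explicit term $V'(\rho_0^n)$ cancels and I am left with the semilinear elliptic identity $U'(\rho_0^n+w)-U'(\rho_0^n)-\gamma\Delta w=c$ for some constant $c$. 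Testing with $w$—which lies in $H^1_m(\Omega)$, being the leading order term of $\rho^\np-\rho^n\in H^1_m(\Omega)$ noted after \eqref{sd:bcv}, so that its zero mean annihilates the right-hand side constant—yields $\int_\Omega[U'(\rho_0^n+w)-U'(\rho_0^n)]w\,\operatorname{d}{\bf x}+\gamma\int_\Omega|\nabla w|^2\,\operatorname{d}{\bf x}=0$. Both integrands are nonnegative, the first by monotonicity of $U'$ (convexity of $U$), so $\nabla w=0$, and the zero-mean property forces $w=0$, i.e. $\rho_0^\np=\rho_0^n=\rho_0^0$. Finally, the $\mathcal{O}(1)$ part of the continuity equation \eqref{sch:EK1}$_1$ reads $\rho_0^\np-\rho_0^n+\tau_n\diver(\rho_0^\np\bv_0^\np)=0$; since the first two terms cancel, $\diver(\rho_0^\np\bv_0^\np)=0$, closing the induction.

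The delicate point is the vanishing of $w$, which is exactly where the convex--concave splitting $W=U-V$ pays off: because only $U$ is evaluated implicitly at $\rho_0^\np$, the test-function argument needs merely the monotonicity of $U'$, rather than coercivity of the possibly indefinite bilinear form $B_\rho$ of \eqref{bilinear} used in the continuous argument of \S\ref{sec:lm}. I would also double-check the bookkeeping that $\Phi^n=\mathcal{O}(1)$ and that the boundary conditions \eqref{sd:bcv}, \eqref{bc:art} descend order by order in $M^2$, but these are routine.
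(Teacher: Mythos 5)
Your proposal is correct and follows essentially the same route as the paper: induction, identification of the constant leading-order potential $\Lambda_0^{n+1}$, subtraction of the Euler--Lagrange relation at level $n$, and testing with $\rho_0^{n+1}-\rho_0^n\in H^1_m(\Omega)$ using convexity of $U$ and the zero-mean property, followed by reading off the divergence constraint from the continuity equation. Your intermediate step deriving $\diver(\rho_0^n\nabla\Lambda_0^{n+1})=0$ from the $\mathcal{O}(M^{-2})$ order of \eqref{sch:EK2}$_1$ and testing it against $\Lambda_0^{n+1}$ merely spells out what the paper leaves implicit when it asserts that $\Lambda_0^{n+1}$ is constant.
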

\begin{proof}
 The proof uses induction. For  $n=0$ 
the assertion becomes
\[ \rho_0^0 =\rho_0^0 \quad  \text{ and } \quad  \diver(\rho_0^0 \bv_0^0)=0\]
which is valid as we assume \eqref{hsd}.
For the induction step we have the induction hypothesis
\[ \rho_0^n =\rho_0^0 \quad  \text{ and } \quad  \diver(\rho_0^n \bv_0^n)=0.\]
In particular, this implies $\rho_0^n  >0.$
 Thus, the leading order of \eqref{sch:EK2}$_2$ and \eqref{bc:art} imply
\begin{equation}\label{eq:inductionstep1}
 U'(\rho_0^{n+1}) - \gamma \Delta \rho_0^{n+1} - V'(\rho_0^n) =\text{const}.
\end{equation}
By induction hypothesis and \eqref{hsd} we have
\begin{equation}\label{eq:inductionstep2}
  U'(\rho_0^{n}) - \gamma \Delta \rho_0^{n} - V'(\rho_0^n) =\text{const}.
\end{equation}
Computing the difference of \eqref{eq:inductionstep1} and \eqref{eq:inductionstep2} we obtain
\begin{equation}\label{eq:inductionstep3}
 U'(\rho_0^{n+1}) - U'(\rho_0^n) - \gamma \Delta (\rho_0^{n+1}-\rho_0^n) =\text{const}.
\end{equation}
Due to the convexity of $U$, the fact that $\rho_0^{n+1} - \rho_0^n \in H^1_m(\Omega)$ and Poincare's inequality we find  $\rho_0^{n+1} = \rho_0^n$
upon testing \eqref{eq:inductionstep3}
with $\rho_0^{n+1} - \rho_0^n.$
We note that the solution of the scheme still satisfies \eqref{sch:EK1}$_1$. Thus,
\[ \diver(\rho_0^\np \bv_0^\np) = \frac{\rho_0^{n}- \rho_0^\np }{\tau_n} =0.\]
\end{proof}

In view of Lemma \ref{lem:sd:rhoconst} our discretisation becomes a constrained evolution equation for $\bv_0$ with Lagrange multiplier $\Lambda_1:$
\begin{equation}\label{sch:EK3}
\begin{split}
 \diver(\rho_0^0 \bv_0^\np)&=0\\
\rho_0^0 (\bv_0^{n+1} - \bv_0^n) + \tau_n \diver (\rho_0^0 \bv_0^n \otimes \bv_0^n) +\tau_n \rho_0^0 \nabla \Lambda_1 &= \tau_n \mu_h \Delta \bv_0^n
\end{split}
\end{equation}
 which is a consistent discretisation of \eqref{eq:divconstraint}, \eqref{eq:vevolution}, i.e., the low Mach limit of the PDE system.

\subsection{Stability in the low Mach limit}
We will show stability of \eqref{sch:EK3}.

\begin{lemma}[Kinetic energy estimate]\label{lem:sd:stability}
 The solution $(\bv_0^n,\Lambda_1^n)_{n \in \{0,\dots,N\}}$ of \eqref{sch:EK3} satisfies
\begin{multline}\label{eq:sd:stability}
\int_\Omega \rho_0^0  |\bv_0^{n+1}|^2 \, \operatorname{d}{\bf x}= \int_\Omega \rho_0^0  |\bv_0^n|^2 - \tau_n (\rho_0^0 \bv_0^n) \cdot D\bv_0^n (\bv_0^{n+1} - \bv_0^n) \\
- 2 \mu_h \tau_n |D\bv_0^n|^2 - \tau_n \mu_h D\bv_0^n : (D\bv_0^{n+1} -D\bv_0^n) \, \operatorname{d}{\bf x}.
\end{multline}
\end{lemma}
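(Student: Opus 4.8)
The plan is to derive \eqref{eq:sd:stability} as a discrete energy identity obtained by testing the momentum update \eqref{sch:EK3}$_2$ with the \emph{symmetric} combination $\bv_0^{n+1}+\bv_0^n$ and integrating over $\Omega$. The reason for this particular test function is that the discrete time derivative then telescopes exactly: since $(\bv_0^{n+1}-\bv_0^n)\cdot(\bv_0^{n+1}+\bv_0^n)=|\bv_0^{n+1}|^2-|\bv_0^n|^2$, the first term of \eqref{sch:EK3}$_2$ produces precisely $\int_\Omega \rho_0^0(|\bv_0^{n+1}|^2-|\bv_0^n|^2)\,\operatorname{d}{\bf x}$, i.e.\ the left-hand side of \eqref{eq:sd:stability} together with its first right-hand term, and no spurious $|\bv_0^{n+1}-\bv_0^n|^2$ contribution survives.

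First I would dispose of the pressure term. Integrating $\tau_n\int_\Omega \rho_0^0 \nabla\Lambda_1\cdot(\bv_0^{n+1}+\bv_0^n)\,\operatorname{d}{\bf x}$ by parts and using $\bv_0^{n+1}+\bv_0^n={\bf 0}$ on $\del\Omega$ leaves $-\tau_n\int_\Omega \Lambda_1\,\diver(\rho_0^0(\bv_0^{n+1}+\bv_0^n))\,\operatorname{d}{\bf x}$. Here I use that the constraint \eqref{sch:EK3}$_1$ holds at \emph{both} time levels: at level $n+1$ it is \eqref{sch:EK3}$_1$ itself, and at level $n$ it is the same constraint one step earlier (equivalently $\diver(\rho_0^n\bv_0^n)=0$ with $\rho_0^n=\rho_0^0$ from Lemma \ref{lem:sd:rhoconst}). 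Hence $\diver(\rho_0^0(\bv_0^{n+1}+\bv_0^n))=0$ and the pressure term vanishes; this is the second reason the symmetric test function is the right choice. Next, for the viscous term, integrating $-\tau_n\mu_h\int_\Omega \Delta\bv_0^n\cdot(\bv_0^{n+1}+\bv_0^n)\,\operatorname{d}{\bf x}$ by parts (again using the boundary condition) gives $\tau_n\mu_h\int_\Omega D\bv_0^n:(D\bv_0^{n+1}+D\bv_0^n)\,\operatorname{d}{\bf x}$ on the left; moving it to the right and splitting $D\bv_0^{n+1}+D\bv_0^n=2D\bv_0^n+(D\bv_0^{n+1}-D\bv_0^n)$ reproduces exactly the terms $-2\mu_h\tau_n\int_\Omega|D\bv_0^n|^2$ and $-\tau_n\mu_h\int_\Omega D\bv_0^n:(D\bv_0^{n+1}-D\bv_0^n)$ in \eqref{eq:sd:stability}. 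This last step is a purely algebraic rearrangement.

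The main obstacle is the convective term, which must be brought into the advective form appearing in \eqref{eq:sd:stability} and must shed its self-interaction part. Here I would invoke the divergence constraint twice. First, $\diver(\rho_0^0\bv_0^n)=0$ lets me rewrite the conservative flux as $\diver(\rho_0^0\bv_0^n\otimes\bv_0^n)=\rho_0^0(\bv_0^n\cdot\nabla)\bv_0^n$, so that testing against $\bv_0^{n+1}+\bv_0^n$ produces $\tau_n\int_\Omega \rho_0^0[(\bv_0^n\cdot\nabla)\bv_0^n]\cdot(\bv_0^{n+1}+\bv_0^n)\,\operatorname{d}{\bf x}$; note that $(\rho_0^0\bv_0^n)\cdot D\bv_0^n$ is exactly this row-vector $\rho_0^0(\bv_0^n\cdot\nabla)\bv_0^n$. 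Second, the self-interaction piece vanishes: $\int_\Omega \rho_0^0[(\bv_0^n\cdot\nabla)\bv_0^n]\cdot\bv_0^n\,\operatorname{d}{\bf x}=\tfrac12\int_\Omega \rho_0^0\bv_0^n\cdot\nabla|\bv_0^n|^2\,\operatorname{d}{\bf x}=-\tfrac12\int_\Omega |\bv_0^n|^2\diver(\rho_0^0\bv_0^n)\,\operatorname{d}{\bf x}=0$, again by integration by parts and the constraint.

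What remains of the convective contribution is therefore exactly $\tau_n\int_\Omega \rho_0^0[(\bv_0^n\cdot\nabla)\bv_0^n]\cdot(\bv_0^{n+1}-\bv_0^n)\,\operatorname{d}{\bf x}$, which, once the sign from moving it to the right-hand side is accounted for, is precisely the term $-\tau_n (\rho_0^0\bv_0^n)\cdot D\bv_0^n(\bv_0^{n+1}-\bv_0^n)$ in \eqref{eq:sd:stability}. Collecting the four contributions then yields \eqref{eq:sd:stability}. The delicate point throughout is that every cancellation (pressure term and convective self-interaction) rests on the discrete incompressibility \eqref{sch:EK3}$_1$ holding at two consecutive time levels, so one must be careful that $\bv_0^n$, and not only $\bv_0^{n+1}$, inherits the constraint.
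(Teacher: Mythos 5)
Your proposal is correct and follows essentially the same route as the paper's proof: test \eqref{sch:EK3}$_2$ with $\bv_0^{n+1}+\bv_0^n$, kill the pressure term and the convective self-interaction via the divergence constraint (valid at both time levels by Lemma \ref{lem:sd:rhoconst} and \eqref{hsd}), and integrate the viscous term by parts before splitting $D\bv_0^{n+1}+D\bv_0^n$. No substantive differences.
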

\begin{remark}[Stability]
Let us stress two facts about the possible increase in energy allowed in  Lemma \ref{lem:sd:stability}:
\begin{enumerate}
\item We expect $\bv_0^{n+1} - \bv_0^n$  and $D\bv_0^{n+1} -D\bv_0^n$ to be of order $\tau,$ thus, the possible increase 
in energy per timestep is of order $\tau^2.$
Hence, the Lemma ensures the stability of the scheme in the low Mach limit independent of the actual value of $M.$
\item In the fully discrete setting we will have a discrete inverse inequality at our disposal which will enable us to prove that
our discrete version of
  $\int_\Omega \rho_0^0  |\bv_0^{n}|^2 \, \operatorname{d}{\bf x}$
is decreasing in $n,$ up to boundary terms.
\end{enumerate}
\end{remark}

 \begin{proof}[Proof of Lemma \ref{lem:sd:stability}] 
  We start by testing \eqref{sch:EK3}$_2$ by $(\bv_0^{n+1} + \bv_0^n).$ This gives using integration by parts and \eqref{sd:bcv}
\begin{multline}\label{eq:sd:stab1}
0= \int_\Omega \rho_0^0 ( |\bv_0^{n+1}|^2 - |\bv_0^n|^2) + \tau_n \diver(\rho_0^0 \bv_0^n \otimes \bv_0^n)\cdot \bv_0^n 
+ \tau_n \diver (\rho_0^0 \bv_0^n \otimes \bv_0^n)\cdot \bv_0^{n+1}\\
-\tau_n  \diver(\rho_0^0(\bv_0^{n+1}+\bv_0^n)) \Lambda_1^\np
- \mu_h \tau_n  (\Delta \bv_0^n) \cdot (\bv_0^n + \bv_0^{n+1})\, \operatorname{d}{\bf x}.
\end{multline}
Note that due to  Lemma \ref{lem:sd:rhoconst} 
\begin{equation}\label{eq:sd:stab2} \diver(\rho_0^0(\bv_0^{n+1}+\bv_0^n)) =0\end{equation}
and
\begin{multline}\label{eq:sd:stab3}
\int_\Omega \diver(\rho^0_0 \bv_0^n \otimes \bv_0^n )\cdot \bv_0^n \, \operatorname{d}{\bf x} = -\int_\Omega (\rho^0_0 \bv_0^n \otimes \bv_0^n ): D \bv_0^n \, \operatorname{d}{\bf x}\\
= - \frac{1}{2} \int_\Omega \rho^0_0 \bv_0^n \cdot \nabla (| \bv_0^n |^2)\, \operatorname{d}{\bf x} =  \frac{1}{2} \int_\Omega \diver(\rho^0_0 \bv_0^n) (| \bv_0^n |^2)\, \operatorname{d}{\bf x}=0.
\end{multline}
Using \eqref{eq:sd:stab2} and \eqref{eq:sd:stab3} in \eqref{eq:sd:stab1} we find
\begin{multline}\label{eq:sd:stab4}
0= \int_\Omega \rho_0^0 ( |\bv_0^{n+1}|^2 - |\bv_0^n|^2) + \tau_n (\rho_0^0 \bv_0^n) \cdot D\bv_0^n (\bv_0^{n+1} - \bv_0^n) \\
+ 2 \mu_h \tau_n |D\bv_0^n|^2 +\tau_n \mu_h D\bv_0^n : (D\bv_0^{n+1} -D\bv_0^n) \, \operatorname{d}{\bf x},
\end{multline}
concluding the proof.

\end{proof}
This finishes our considerations concerning the low Mach limit.
We have seen that the scheme converges to a stable approximation of the right set of equations, i.e., \eqref{eq:divconstraint} and \eqref{eq:vevolution}.

\subsection{Stability for generic Mach numbers}
Here we study the stability of the scheme in case of generic Mach numbers.
\begin{lemma}[Energy estimate]\label{lem:sdns}
Let \[\tau_n < \min\Big\{ \mu_h, \frac{\rho^n_{\min}\mu_h}{2 (\|\bv^n\|_\infty + \|\bv^\np\|_\infty )^2 \|\rho^\np\|_\infty^2}\Big\}\quad \text{with }
\rho^n_{\min} = \min_{x \in \Omega} \rho^n(x)\] 
where $\| \cdot \|_\infty:= \| \cdot \|_{L^\infty(\Omega)},$ then 
\begin{equation}
\begin{split}
&\int_\Omega \frac{1}{M^2} \left( W(\rho^{n+1})+\frac{\gamma}{2}|\nabla \rho^{n+1}|^2 \right) + \frac{1}{2} \rho^{n+1} |\bv^{n+1}|^2 \, \operatorname{d}{\bf x}\\
& \quad\leq \int_\Omega \frac{1}{M^2} \left( W(\rho^{n})+\frac{\gamma}{2}|\nabla \rho^{n}|^2 \right) + \frac{1}{2} \rho^{n} |\bv^{n}|^2 \, \operatorname{d}{\bf x}\\
& \quad+ \int_\Omega \frac{\tau_n^2}{2\kappa_VM^2} |\bv^\np|^2 |\nabla \Lambda^\np|^2
+ \tau_n^2|\diver(\rho^\np \bv^\np)|^2 |\bv^n|^4 
+ \frac{\mu_h \tau_n}{2} |D\bv^\np -D\bv^n|^2 \, \operatorname{d}{\bf x}.
\end{split}
\end{equation}
\end{lemma}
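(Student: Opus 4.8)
The plan is to derive the estimate by testing the three relations in \eqref{sch:EK1} against suitable quantities and adding the results, so that the increment of the total energy is written as a sum of terms which are either manifestly non-positive (dissipative) or controllable. Concretely, I would test the momentum balance \eqref{sch:EK1}$_2$ with $\bv^\np$, use the convexity splitting $W=U-V$ together with relation \eqref{sch:EK1}$_3$ to control the increment of the internal and capillary energy, and use the mass balance \eqref{sch:EK1}$_1$ to couple the two.

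For the kinetic part, testing \eqref{sch:EK1}$_2$ with $\bv^\np$ and invoking the algebraic identity
\[ (\rho^\np\bv^\np-\rho^n\bv^n)\cdot\bv^\np = \tfrac12\rho^\np|\bv^\np|^2 - \tfrac12\rho^n|\bv^n|^2 + \tfrac12(\rho^\np-\rho^n)|\bv^\np|^2 + \tfrac12\rho^n|\bv^\np-\bv^n|^2 \]
isolates the kinetic energy difference plus the nonnegative numerical dissipation $\tfrac12\rho^n|\bv^\np-\bv^n|^2$ and a density-variation term. For the potential part, convexity of $U$ and of $V$ (with $V''\ge\kappa_V$) yields the pointwise bound
\[ W(\rho^\np)-W(\rho^n) \le \big(U'(\rho^\np)-V'(\rho^n)\big)(\rho^\np-\rho^n) - \tfrac{\kappa_V}{2}|\rho^\np-\rho^n|^2, \]
and \eqref{sch:EK1}$_3$ replaces $U'(\rho^\np)-V'(\rho^n)$ by $\Lambda^\np+\gamma\Delta\rho^\np$. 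Integrating by parts the capillary term (using $\nabla\rho^\np\cdot\bnu=0$ from \eqref{sd:bcv}) produces the difference $\tfrac{\gamma}{2}(|\nabla\rho^\np|^2-|\nabla\rho^n|^2)$ together with the good term $-\tfrac{\gamma}{2}|\nabla(\rho^\np-\rho^n)|^2$, while the mass balance \eqref{sch:EK1}$_1$ and integration by parts (using $\bv^\np=0$) convert $\int_\Omega\Lambda^\np(\rho^\np-\rho^n)\,\operatorname{d}{\bf x}$ into $\tau_n\int_\Omega\rho^\np\bv^\np\cdot\nabla\Lambda^\np\,\operatorname{d}{\bf x}$.

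Adding the two estimates, the pressure term $\tfrac{\tau_n}{M^2}\int_\Omega\rho^n\bv^\np\cdot\nabla\Lambda^\np\,\operatorname{d}{\bf x}$ from the momentum test and the coupling term $\tfrac{\tau_n}{M^2}\int_\Omega\rho^\np\bv^\np\cdot\nabla\Lambda^\np\,\operatorname{d}{\bf x}$ from the potential test combine to the mismatch $\tfrac{\tau_n}{M^2}\int_\Omega(\rho^\np-\rho^n)\bv^\np\cdot\nabla\Lambda^\np\,\operatorname{d}{\bf x}$, which is exactly the price of treating $\rho$ explicitly in the momentum pressure term. One Young inequality bounds this by $\tfrac{\kappa_V}{2M^2}|\rho^\np-\rho^n|^2$ — cancelling the dissipative term from $V''\ge\kappa_V$ — plus the first asserted error term $\tfrac{\tau_n^2}{2\kappa_VM^2}|\bv^\np|^2|\nabla\Lambda^\np|^2$. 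The viscous contribution is handled by writing $\tau_n\mu_h\int_\Omega\Delta\bv^n\cdot\bv^\np\,\operatorname{d}{\bf x} = -\tau_n\mu_h\int_\Omega D\bv^n:D\bv^\np\,\operatorname{d}{\bf x}$ and splitting $D\bv^\np = D\bv^n + (D\bv^\np-D\bv^n)$; Young then leaves the retained dissipation $-\tfrac{\tau_n\mu_h}{2}\int_\Omega|D\bv^n|^2\,\operatorname{d}{\bf x}$ and the third asserted error term $\tfrac{\mu_h\tau_n}{2}|D\bv^\np-D\bv^n|^2$.

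The main obstacle is the explicit convective contribution, i.e.\ the combination of the density-variation term $-\tfrac12\int_\Omega(\rho^\np-\rho^n)|\bv^\np|^2\,\operatorname{d}{\bf x}$ with $\tau_n\int_\Omega\diver(\rho^n\bv^n\otimes\bv^n)\cdot\bv^\np\,\operatorname{d}{\bf x}$. Here the velocity enters at the mismatched time levels $n$ and $n+1$, so one cannot reproduce the exact skew-symmetric cancellation of the continuous estimate in Lemma \ref{lem:sd:stability}. Instead I would integrate by parts, substitute $\rho^\np-\rho^n=-\tau_n\diver((\rho\bv)^\np)$ from \eqref{sch:EK1}$_1$, and reorganise these terms into a remainder measuring the time-lag between $\bv^n$ and $\bv^\np$. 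Applying Young's inequality to this remainder, one factor is split off as the second asserted error term $\tau_n^2|\diver(\rho^\np\bv^\np)|^2|\bv^n|^4$, and the conjugate factors are estimated in $L^\infty$ and absorbed into the two surviving non-positive terms $-\tfrac12\int_\Omega\rho^n|\bv^\np-\bv^n|^2\,\operatorname{d}{\bf x}$ and $-\tfrac{\tau_n\mu_h}{2}\int_\Omega|D\bv^n|^2\,\operatorname{d}{\bf x}$. This absorption is precisely what forces the timestep restriction: the constant produced by the $L^\infty$ bounds is of order $(\norm{\bv^n}_\infty+\norm{\bv^\np}_\infty)^2\norm{\rho^\np}_\infty^2/(\rho^n_{\min}\mu_h)$, and requiring $\tau_n$ below the stated threshold (together with $\tau_n<\mu_h$) guarantees that the remainder is dominated by the dissipative terms rather than contributing positively to the energy. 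Collecting the surviving terms then yields the claimed inequality.
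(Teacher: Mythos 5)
Your proposal follows essentially the same route as the paper's own proof: test the mass balance with $\tfrac{1}{M^2}\Lambda^\np-\tfrac12|\bv^\np|^2$ and the momentum balance with $\bv^\np$, use the convexity splitting together with $V''\ge\kappa_V$ to absorb the $\tfrac{\kappa_V}{2M^2}|\rho^\np-\rho^n|^2$ term generated by the Young estimate of the pressure mismatch $\tfrac{\tau_n}{M^2}\int_\Omega(\rho^n-\rho^\np)\bv^\np\cdot\nabla\Lambda^\np\,\operatorname{d}{\bf x}$, and treat the convective remainder by integration by parts, the substitution $\rho^\np-\rho^n=-\tau_n\diver(\rho^\np\bv^\np)$ and Young's inequality, the absorption into the dissipative terms being exactly what produces the stated timestep restriction. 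The one bookkeeping point to correct when writing this out is the viscous term: after integration by parts the convective remainder is $\tau_n\int_\Omega\big(\rho^\np\bv^\np\otimes\bv^\np-\rho^n\bv^n\otimes\bv^n\big):D\bv^\np\,\operatorname{d}{\bf x}$, so its conjugate Young factors appear as $\tfrac{\tau_n\mu_h}{4}|D\bv^\np|^2$; you therefore need to expand $D\bv^n=D\bv^\np-(D\bv^\np-D\bv^n)$ and retain the dissipation $-\tfrac{\tau_n\mu_h}{2}\int_\Omega|D\bv^\np|^2\,\operatorname{d}{\bf x}$ (as the paper does), rather than $-\tfrac{\tau_n\mu_h}{2}\int_\Omega|D\bv^n|^2\,\operatorname{d}{\bf x}$ as you propose, since otherwise the $|D\bv^\np|^2$ contributions cannot be absorbed without an extra triangle inequality that spoils the coefficient $\tfrac{\mu_h\tau_n}{2}$ in front of $|D\bv^\np-D\bv^n|^2$ claimed in the lemma.
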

\begin{remark}[Stability] We like to stress that:
\begin{enumerate}
 \item The possible increase in energy per timestep is $\mathcal{O}(\tau^2).$
\item In the fully discrete case we will be able to control $\int_\Omega \frac{\mu_h \tau_n}{2} |D\bv^\np -D\bv^n|^2 \, \operatorname{d}{\bf x}$ via an inverse inequality.
\end{enumerate}
\end{remark}

\begin{proof}[Proof of Lemma \ref{lem:sdns}]
We multiply \eqref{sch:EK1}$_1$ with $\tfrac{1}{M^2}\Lambda^\np - \tfrac{1}{2} |\bv^\np|^2 $
 and \eqref{sch:EK1}$_2$ with $\bv^{n+1}$.
Integrating over $\Omega$ and summing both equations gives
\begin{equation}\label{eq:stab19}
\begin{split}
0=& \int_\Omega (\rho^\np - \rho^n) \left( \tfrac{1}{M^2}(U'(\rho^{n+1}) -\gamma \Delta \rho^{n+1} - V'(\rho^n)) - \tfrac{1}{2} |\bv^\np|^2 \right)\\
&\qquad+ \tau_n \diver(\rho^\np \bv^\np)  \left( \tfrac{1}{M^2}\Lambda^\np - \tfrac{1}{2} |\bv^\np|^2 \right)
+ \rho^\np |\bv^\np|^2 - \rho^n \bv^n \cdot \bv^\np \\ &\qquad+ \tau_n \diver(\rho^n \bv^n \otimes \bv^n)\cdot\bv^\np
 + \frac{\tau_n}{M^2} \rho^n \bv^{n+1} \cdot \nabla \Lambda^\np - \tau_n \mu_h \bv^\np\cdot \Delta \bv^n\, \operatorname{d}{\bf x}.
\end{split}
\end{equation}
Let us consider the terms in \eqref{eq:stab19} one by one. 
Since $U$ and $V$ are convex we have
\begin{equation}
\label{eq:stab20}
\begin{split}
U(\rho^\np) - U(\rho^n)
 &\leq (\rho^\np - \rho^n) U'(\rho^\np), \\
-V(\rho^\np) + V(\rho^n) &\leq -(\rho^\np - \rho^n) V'(\rho^n) - \frac{\kappa_V}{2} (\rho^\np-\rho^n)^2.
\end{split}
\end{equation}
Moreover, integration by parts, \eqref{sd:bcv} and Young's inequality imply
\begin{equation}
\label{eq:stab22}
- \int_\Omega (\rho^\np - \rho^n) \Delta \rho^\np \, \operatorname{d}{\bf x} = \int_\Omega |\nabla\rho^\np|^2 - \nabla \rho^n \cdot \nabla \rho^\np \, \operatorname{d}{\bf x}
\geq \frac{1}{2} \int_\Omega |\nabla\rho^\np|^2 - |\nabla\rho^n|^2 \, \operatorname{d}{\bf x}
\end{equation}
and, again by integration by parts and Young's inequality, we see
 \begin{multline}
 \label{eq:stab23}
 \Big| \frac{\tau_n}{M^2} \int_\Omega \diver(\rho^\np \bv^\np) \Lambda^\np\!
 + \rho^n \bv^\np\! \cdot\!\nabla \Lambda^\np  \, \operatorname{d}{\bf x}\Big|
 = \left|\frac{\tau_n}{M^2}\int_\Omega (\rho^n - \rho^\np) \bv^\np\! \cdot\!\nabla \Lambda^\np \, \operatorname{d}{\bf x} \right|\\
 \leq \frac{\kappa_V}{2M^2} \int_\Omega (\rho^\np -\rho^n)^2 \, \operatorname{d}{\bf x}  + \frac{\tau_n^2}{2\kappa_VM^2}  \int_\Omega |\bv^\np|^2
|\nabla \Lambda^\np|^2 \, \operatorname{d}{\bf x}. 
 \end{multline}
Using integration by parts we find
 \begin{equation}\label{eq:stab24}
\begin{split}
&\left|\tau_n \int_\Omega \diver( \rho^\np \bv^\np ) ( - \frac{1}{2} |\bv^\np|^2) + \diver(\rho^n \bv^n \otimes \bv^n)\cdot \bv^\np\, \operatorname{d}{\bf x}   \right|\\
& =\left| \tau_n \int_\Omega (\rho^\np \bv^\np \otimes \bv^\np): D\bv^\np - \rho^n (\bv^n \otimes \bv^n ):D\bv^\np \, \operatorname{d}{\bf x}   \right|\\
&\leq \tau_n \int_\Omega |\rho^\np -\rho^n| |\bv^n|^2 |D\bv^\np| \, \operatorname{d}{\bf x} + \tau_n \int_\Omega \rho^\np (|\bv^n|+ |\bv^\np|) |\bv^\np - \bv^n| |D\bv^\np|\, \operatorname{d}{\bf x} \\
& \leq \tau_n \int_\Omega \frac{\tau_n^2}{\mu_h} |\diver(\rho^\np \bv^\np)|^2 |\bv^n|^4 \, \operatorname{d}{\bf x} + \frac{\tau_n \mu_h}{4} \int_\Omega |D\bv^\np|^2 \, \operatorname{d}{\bf x}\\
& \quad +\frac{\tau_n}{\mu_h} \|\rho^\np\|_\infty^2 \int_\Omega | \bv^\np -\bv^n|^2 (|\bv^n|+ |\bv^\np|)^2 \, \operatorname{d}{\bf x} 
+ \frac{\tau_n \mu_h}{4} \int_\Omega |D\bv^\np|^2 \, \operatorname{d}{\bf x}.
\end{split}
 \end{equation}
In addition, by Young's inequality we have
\begin{multline}\label{eq:stab25}
\mu_h \tau_n \int_\Omega D\bv^\np : D\bv^n \, \operatorname{d}{\bf x} = \mu_h \tau_n \int_\Omega |D\bv^\np|^2 \, \operatorname{d}{\bf x} - \mu_h \tau_n \int_\Omega D\bv^\np : (D\bv^\np - D\bv^n) \, \operatorname{d}{\bf x}\\
\geq \frac{\mu_h \tau_n}{2} \int_\Omega |D\bv^\np|^2 \, \operatorname{d}{\bf x} - \frac{\mu_h \tau_n}{2} \int_\Omega |D\bv^\np - D\bv^n|^2\, \operatorname{d}{\bf x}.
\end{multline}
It also holds that
\begin{multline}\label{eq:stab26}
-\frac{1}{2} (\rho^\np - \rho^n) |\bv^\np|^2 + \rho^\np |\bv^\np|^2 -\rho^n \bv^n \cdot \bv^\np \\ 
= \frac{1}{2} \rho^\np |\bv^\np|^2 -\frac{1}{2} \rho^n |\bv^n|^2 
+ \frac{1}{2}\rho^n |\bv^\np - \bv^n|^2.
\end{multline}
Inserting \eqref{eq:stab20}--\eqref{eq:stab26} into \eqref{eq:stab19} we get
\begin{equation}\label{eq:stab27}
\begin{split}
0 &\geq \int_\Omega \frac{1}{M^2} \left(W(\rho^\np) - W(\rho^n) + \frac{\gamma}{2} |\nabla \rho^\np|^2 - \frac{\gamma}{2} |\nabla \rho^n|^2   \right)
 + \frac{1}{2} \rho^\np |\bv^\np|^2 
 -\frac{1}{2} \rho^n |\bv^n|^2 \\ &+ \frac{1}{2} \rho^n_{\min} |\bv^{n+1}- \bv^n|^2 
- \frac{\tau_n^2}{2\kappa_VM^2} |\bv^\np|^2 |\nabla \Lambda^\np|^2
- \frac{\tau_n^3}{\mu_h} |\diver(\rho^\np \bv^\np)|^2 |\bv^n|^4
 \\
&- \frac{\tau_n}{\mu_h} \|\rho^\np\|_\infty^2 (\|\bv^n\|_\infty + \|\bv^\np\|_\infty)^2 |\bv^\np -\bv^n|^2 
- \frac{\mu_h \tau_n}{2} |D\bv^\np -D\bv^n|^2 \, \operatorname{d}{\bf x}.
\end{split}
\end{equation}
The assertion of the Lemma follows from our assumptions on $\tau_n.$
\end{proof}


\section{The fully discrete scheme}\label{sec:fd}

In this section we consider a fully discrete finite difference scheme, which is based on the semi-discretisation investigated in \S \ref{sec:sd}.
We restrict ourselves to the case of a Cartesian mesh and perform all calculations in $2D$. The restriction to one space dimension as well as the extension to three
space dimensions is straightforward.
 For ease of presentation, we assume that our grid has the same meshsize $h$ in both space directions.
In particular, the computational domain will be $\Omega=[0,1]^2,$ and by ${\bf x}_\iij$ we denote $(ih,jh)^T \in [0,1]^2$ where
 we choose $h= \tfrac{1}{K}$ for some $K \in \mathbb{N}.$
For a generic field $f$ we denote our approximation of $f({\bf x}_\iij)$ by $f_\iij.$

\subsection{The fully discrete scheme}
To avoid too many indexes we decompose $\bv = (u,w)^T$ and introduce the following operators which we define
for some generic grid function $(f_\iij)_\iij$ or (grid) vector field $({\bf f}_\iij)_\iij$ with ${\bf f}_\iij = (f^1_\iij,f^2_\iij)^T$:
\begin{align}\label{def:operators1}
 (\nabla_h f)_\iij &:= \frac{1}{2h}( f_\ipj - f_\imj, f_\ijp  - f_\ijm)^T,\\
\label{def:operators2}(\widetilde \nabla_h f)_\iij &:= \frac{1}{h} (f_\ipj - f_\iij,  f_\ijp -  f_\iij)^T, \\
\label{def:operators3}(\diver_h {\bf f})_\iij &:=\frac{1}{2h}( f^1_\ipj - f^1_\imj + f^2_\ijp - f^2_\ijm),\\
\label{def:operators4}(D_h {\bf f})_\iij&:= \frac{1}{h} \left( \begin{array}{cc}
                                         f^1_\ipj - f^1_\iij & f^1_\ijp - f^1_\iij\\
                                         f^2_\ipj - f^2_\iij & f^2_\ijp - f^2_\iij
                                        \end{array}\right),\\
\label{def:operators5}(\Delta_h f)_\iij &:= \frac{1}{h^2} (f_\ipj + f_\imj + f_\ijp + f_\ijm - 4 f_\iij).
\end{align}
The domain of definition of the functions obtained in this way depends on the domain of definition of $f$ and ${\bf f}$,
e.g., if $f_\iij $ is defined for $(i,j)\in \{0,\dots,K\}^2$ then $(\nabla_h f)_\iij$ is defined for $(i,j)\in \{1,\dots,K-1\}^2.$
Let us note for later use that for any  ${\bf f}=({\bf f}_\iij)_{(i,j)\in \{0,\dots,K\}^2}$
 the discrete Jacobian $D_h$ fulfils the following inverse inequality
\begin{equation}\label{eq:invp}
 \sum_{i,j=0}^{K-1} |(D_h {\bf f})_\iij|^2 \leq \frac{8}{h^2} \sum_{i,j=0}^K |{\bf f}_\iij|^2.
\end{equation}
In addition, we use a rather specialised operator to discretise $\diver(\rho \bv \otimes \bv)$, i.e.,
\begin{multline}\label{def:divergence}
 (\widetilde \diver_h (\rho \bv \otimes \bv))_\iij\\ =
\frac{1}{4h} \Big((\bv_\iij + \bv_\ipj)( \rho_\iij u_\iij + \rho_\ipj u_\ipj)
                - (\bv_\iij + \bv_\imj)( \rho_\iij u_\iij + \rho_\imj u_\imj)\\
                + (\bv_\iij + \bv_\ijp)( \rho_\iij w_\iij + \rho_\ijp w_\ijp)
                - (\bv_\iij + \bv_\ijm)( \rho_\iij w_\iij + \rho_\ijm w_\ijm)
\Big).
\end{multline}
 We will study the following fully discrete scheme
\begin{equation}\label{f:rho}
\rho^\np_\iij - \rho^n_\iij + \tau_n \diver_h( \rho^\np \bv^\np)_\iij =0,
\end{equation}
\begin{equation}\label{f:rhov}
\rho^\np_\iij \bv^\np_\iij - \rho^n_\iij \bv^n_\iij 
+ \tau_n \widetilde \diver_h (\rho^n \bv^n \otimes \bv^n)_\iij
+ \rho^n_\iij \frac{\tau_n}{M^2} (\nabla_h \Lambda^\np)_\iij
 - \mu_h \tau_n (\Delta_h \bv^n)_\iij ={\bf 0},
\end{equation}
\begin{equation}\label{f:Lambdaalt}
	\Lambda^\np_\iij - U'(\rho_\iij^\np) + V'(\rho_\iij^n) + \gamma( \Delta_h \rho^\np)_\iij =0,
\end{equation}
\noindent
for $(i,j)\in\{0,\dots,K\}^2.$ 
We implement the boundary conditions for $\rho$ via a \emph{ghost cell} approach 
\begin{equation}\label{bc:rho}
\rho^\np_{-1,j}= \rho^\np_{0,j},\quad \rho^\np_{K+1,j}= \rho^\np_{K,j},  \quad \rho^\np_{i,-1}= \rho^\np_{i,0},\quad \rho^\np_{i,K+1}= \rho^\np_{i,K},
\end{equation}
for $(i,j)\in\{0,\dots,K\}^2$ and analogous for $\Lambda.$
We weakly enforce the boundary conditions on $\bv$ by setting
\begin{equation}\label{bc:v} 
\bv^\np_{-1,j}= -\bv^\np_{0,j},\quad \bv^\np_{K+1,j}=- \bv^\np_{K,j},\quad
 \bv^\np_{i,-1}=- \bv^\np_{i,0},\quad \bv^\np_{i,K+1}= -\bv^\np_{i,K},
\end{equation}
as in \cite{HJL12}.
Let us note that extending $\rho^n,\bv^n,\Lambda^n$ by these boundary conditions makes equations \eqref{f:rho} -- \eqref{f:Lambdaalt} well--defined.

\begin{remark}[Choice of discretisation]

\

\begin{enumerate}
 \item The time discretisation in \eqref{f:rho} - \eqref{f:Lambdaalt} is the same as in \S \ref{sec:sd}.
\item The advection term $\diver(\rho \bv \otimes \bv)$ is discretised such that the compatibility property in Lemma \ref{lem:Seite8}
holds.
\item The pressure gradient is discretised nonconservatively, see \cite{Die07,GMP13}.
\item The remaining spatial derivatives are discretised by central differences.
\end{enumerate}
\end{remark}

\begin{remark}[Conservation properties]\label{rem:masscons}
The scheme \eqref{f:rho} -- \eqref{bc:v} is mass conserving. In fact, it is an easy consequence of \eqref{f:rho},\eqref{bc:rho}
 and \eqref{bc:v} that
\begin{equation}\label{eq:masscons} \sum_{i,j=0}^K \rho^\np_\iij = \sum_{i,j=0}^K \rho^n_\iij  \quad \text{for all } n \in \{0,\dots,N-1\}.\end{equation}
We discretised the pressure gradient as $\rho^n \nabla \Lambda^\np$ which is nonconservative, thus, the scheme does not conserve momentum. 
Still, we like to stress that this is the only nonconservative term.
In particular, $\widetilde \diver (\rho \bv \otimes \bv)$ is conservative.
\end{remark}
\noindent
For later use, let us define the following sets
\begin{equation}
\begin{split}
 \del \Omega  &:= \big\{ (i,j) \in \{0,\dots,K\}^2 \, : \, i \in \{0,K\} \text{ or } j \in \{0,K\} \big\}, \\
 \bar \del \Omega  &:= \big\{ (i,j) \in \{-1,\dots,K+1\}^2 \, : \, i \in \{-1,K+1\} \text{ or } j \in \{-1,K+1\} \big\}.
\end{split}
\end{equation}

\subsection{Well--posedness of the scheme}
The well-posedness of the scheme results from its decomposition into an implicit equation for $\rho^\np$ and an explicit equation for $\bv^\np$.
To this end, we
insert the expression for $\rho_\iij^\np \bv_\iij^\np$  from \eqref{f:rhov} into \eqref{f:rho}
 and we obtain
\begin{equation}\label{f:ch}
\rho^\np_\iij - \frac{\tau_n^2}{M^2} \diver_h \Big( \rho^n (\nabla_h \Lambda^\np) \Big)_\iij = \rho^n_\iij -\tau_n \diver_h (\Phi^n)_\iij
\end{equation}
where $\Phi^n_\iij$ depends on quantities known at time $n$ only, i.e.,
\[ \Phi^n_\iij := 
\rho^n_\iij \bv^n_\iij - \tau_n \widetilde \diver_h (\rho^n \bv^n \otimes \bv^n)_\iij + \mu_h \tau_n (\Delta_h \bv^n)_\iij
\]
for $(i,j)\in \{0,\dots,K\}^2$ and $ \rho^n(\nabla_h \Lambda^\np)$ and $\Phi^n$ are extended to $(i,j) \in \bar \del \Omega$ using \eqref{bc:v}.

We will show that \eqref{f:ch}, \eqref{f:Lambdaalt} is uniquely solvable.
To do this, we  need  some definitions: By $\mathbb{V}$ we denote the space of all real valued tuples $(k_\iij)_{(i,j) \in \{0,\dots,K\}^2} $, i.e., 
$\mathbb{V}= \mathbb{R}^{(K+1)^2}$
and by $\mathbb{U} $ the subspace of $\mathbb{V}$ such that 
 $k_\iij$ is identical for all $\iij.$
The orthogonal complement of $\mathbb{U}$ in $\mathbb{V}$ with respect to the canonical scalar product is denoted by $\mathbb{V}_m.$
For any tuple $(q_\iij)_{(i,j) \in \{0,\dots,K\}^2}$ with $q_\iij >0$ for all $i,j$ the bilinear form
\begin{equation} B^h_q: (\mathbb{V}_m)^2 \rightarrow \R,\quad
 (l_\iij,k_\iij) \mapsto \sum_{i,j=0}^{K}  q_\iij (\nabla_h l)_\iij \cdot (\nabla_h k)_\iij
\end{equation}
is continuous and coercive, when $k,l$ are extended by \eqref{bc:rho}, in view of \eqref{def:operators1}.
Thus, it exists a linear, invertible operator
\begin{equation}\label{def:Gh}
 G^h_q : \mathbb{V}_m \rightarrow \mathbb{V}_m , \ \text{with} \quad 
- B^h_q(G_{q}^h(v),\chi)
 = \sum_{i,j=0}^K v_\iij \chi_\iij  \ \text{ for all } \chi \in \mathbb{V}_m.
\end{equation}
Because of the way $\Lambda^\np$ enters in \eqref{f:rhov} it is sufficient to extract the $\mathbb{V}_m$ part
 of $\Lambda^\np$ from \eqref{f:Lambdaalt}. 
Hence, we replace \eqref{f:Lambdaalt} by
\begin{equation}\label{f:Lambda}
	\Lambda^\np_\iij  + \mathbb{P}(- U'(\rho^\np) + V'(\rho^n) + \gamma( \Delta_h \rho^\np))_\iij =0,
\end{equation}
where $\mathbb{P}: \mathbb{V} \rightarrow \mathbb{V}_m$ is the orthogonal projection.
From \eqref{eq:masscons} we know $\rho^\np - \rho^n \in \mathbb{V}_m.$ 
Now we are in position to formulate our Lemma concerning existence and uniqueness of $\rho^\np,\Lambda^\np$.

\begin{lemma}[Well-posedness]\label{lem:wpscheme}
Let $\rho^n_\iij >0$ for all $(i,j)\in \{0,\dots,K\}^2$ then
there exists a  unique solution $(\rho^\np, \Lambda^\np)      \in (\rho^n + \mathbb{V}_m) \times \mathbb{V}_m$ of \eqref{f:ch}, \eqref{f:Lambda}.
\end{lemma}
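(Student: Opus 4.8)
The plan is to exploit the structure of the coupled system \eqref{f:ch}, \eqref{f:Lambda}: the auxiliary variable $\Lambda^\np$ enters \eqref{f:ch} only through $\diver_h(\rho^n \nabla_h \Lambda^\np)$, so I would first eliminate $\Lambda^\np$ entirely and reduce the problem to a single equation for $\rho^\np$ posed on the affine space $\rho^n + \mathbb{V}_m$. Concretely, I would use the operator $G^h_{\rho^n}$ from \eqref{def:Gh} to invert the discrete weighted Laplacian. Setting $r := \rho^\np - \rho^n \in \mathbb{V}_m$, I substitute $\Lambda^\np$ from \eqref{f:Lambda} into \eqref{f:ch} and rewrite the whole system as a fixed-point or variational problem for $r$.

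**Variational formulation.** Because everything lives in the finite-dimensional Hilbert space $\mathbb{V}_m$ and $G^h_{\rho^n}$ is invertible (guaranteed by the coercivity of $B^h_{\rho^n}$, which holds since $\rho^n_\iij>0$ by hypothesis), I would recast the equation as the condition that $r$ be a critical point of a functional $J:\mathbb{V}_m\to\R$ of the form
\begin{equation}
J(r) = \sum_{i,j=0}^K \Big( U(\rho^n_\iij + r_\iij) + \tfrac{\gamma}{2}|(\widetilde\nabla_h(\rho^n+r))_\iij|^2 \Big) + \tfrac{M^2}{2\tau_n^2}\, B^h_{\rho^n}\big(G^h_{\rho^n}(r),G^h_{\rho^n}(r)\big) - \langle \ell^n, r\rangle,
\end{equation}
where $\ell^n\in\mathbb{V}_m$ collects the explicitly known data (the $V'(\rho^n)$ term and the $\diver_h\Phi^n$ contribution). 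The key point is that $U$ is convex (indeed $U''\ge 0$), the discrete Dirichlet term is convex in $r$, and the $G^h_{\rho^n}$ term is a genuine inner-product square, hence nonnegative and convex. I would verify that $J$ is in fact \emph{strictly} convex, so that existence and uniqueness both follow from the direct method: $J$ is continuous, strictly convex, and coercive on the finite-dimensional space $\mathbb{V}_m$, so it admits a unique minimiser, whose Euler--Lagrange equation is precisely the reduced form of \eqref{f:ch}, \eqref{f:Lambda}.

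**Recovering the solution and the main obstacle.** Once $r$, and thus $\rho^\np=\rho^n+r$, is obtained, $\Lambda^\np\in\mathbb{V}_m$ is defined by \eqref{f:Lambda}, giving the full solution pair in $(\rho^n+\mathbb{V}_m)\times\mathbb{V}_m$; uniqueness of $\Lambda^\np$ follows since \eqref{f:Lambda} determines it explicitly from $\rho^\np$. The delicate step, and the one I expect to be the main obstacle, is establishing coercivity and strict convexity of $J$ uniformly enough to conclude. The difficulty is that $U$ is merely convex (not uniformly so), so strict convexity of $J$ must come from the combination of the strictly convex $G^h_{\rho^n}$ term together with the discrete Dirichlet energy; I would need to argue that on $\mathbb{V}_m$ the quadratic part $\tfrac{M^2}{2\tau_n^2}B^h_{\rho^n}(G^h_{\rho^n}(r),G^h_{\rho^n}(r)) + \tfrac{\gamma}{2}\|\widetilde\nabla_h r\|^2$ is positive definite, which rests on the fact that $G^h_{\rho^n}(r)=0$ forces $r=0$ for $r\in\mathbb{V}_m$ (injectivity of $G^h_{\rho^n}$) and a discrete Poincaré inequality controlling $\|r\|$ by $\|\widetilde\nabla_h r\|$ on the mean-zero subspace. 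Care is needed because $U$ could, in principle, be linear on part of its domain, so the nondegeneracy cannot be sourced from $U$ alone; isolating where the strict convexity genuinely comes from is the crux of the argument.
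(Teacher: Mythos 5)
Your proposal is correct and follows essentially the same route as the paper: eliminate $\Lambda^{n+1}$ via the inverse weighted discrete Laplacian $G^h_{\rho^n}$, recast the reduced equation for $\rho^{n+1}\in\rho^n+\mathbb{V}_m$ as the Euler--Lagrange equation of exactly the functional \eqref{eq:min}, and obtain existence from the direct method and uniqueness from the positive definiteness of the quadratic ($G^h_{\rho^n}$ and discrete Dirichlet) terms on $\mathbb{V}_m$ rather than from $U$. The paper phrases uniqueness as testing the difference of two critical points with itself and concluding via $\mathbb{V}_m\cap\mathbb{U}=\{0\}$, which is precisely the strict-convexity argument you outline.
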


\begin{remark}[Time step size]
 It is important to note that Lemma \ref{lem:wpscheme} does not require a timestep restriction. This is due to our splitting of $W.$
Apart from that the proof is similar to the well-posedness proof of the scheme in \cite{BBG99}.
\end{remark}

\begin{proof}[Proof of Lemma \ref{lem:wpscheme}]
As pointed out before $\rho^n(\nabla_h \Lambda^\np)$ (which replaces momentum in the mass conservation equation) is  extended to $\bar \del \Omega$
by \eqref{bc:v} and let $\psi \in \mathbb{V}_m$ be extended to $\bar \del \Omega$  by  \eqref{bc:rho}. Then, it holds
\begin{equation*}
\sum_{i,j=0}^K (\diver_h(\rho^n \nabla_h \Lambda^\np))_\iij\psi_\iij
= -\sum_{i,j=0}^K \rho^n_\iij(\nabla_h \Lambda^\np)_\iij (\nabla_h \psi)_\iij 
= \sum_{i,j=0}^{K} ((G^h_{\rho^n})^{-1}(\Lambda^\np))_\iij \psi_\iij.
\end{equation*}
Thus,  we  may equivalently pose  \eqref{f:ch} as 
\begin{equation}\label{eq:Gh}
\sum_{i,j=0}^K  \Big(\rho^\np_\iij  - \frac{\tau_n^2}{M^2} ((G^h_{\rho^n})^{-1}(\Lambda^\np))_\iij  - 
(\rho^n_\iij - \tau_n \diver_h( \Phi^n)_\iij)\Big) \psi_\iij =0
\quad \text{for all } \psi \in \mathbb{V}_m.
\end{equation}
As $\rho^\np - \frac{\tau_n^2}{M^2} ((G^h_{\rho^n})^{-1}\Lambda^\np)- \rho^n + \tau_n \diver_h( \Phi^n) \in \mathbb{V}_m$ 
is uniquely determined by \eqref{eq:Gh} we can apply 
$G^h_{\rho^n}$ to \eqref{eq:Gh}  and due to \eqref{f:Lambda}
\begin{equation}\label{eq:EL}
\frac{M^2}{\tau_n^2} (G^h_{\rho^n}(\rho^\np - \Psi^n))_\iij =\mathbb{P}( U'(\rho^\np) - V'(\rho^n)  
- \gamma ( \Delta_h \rho^\np))_\iij,
\end{equation}
where we used $\Psi^n:= \rho^n- \tau_n \diver_h(\Phi^n)$ for brevity.
When we extend functions in $\mathbb{V}_m$ by the boundary conditions \eqref{bc:rho}, equation
 \eqref{eq:EL} is the Euler--Lagrange equation to the following minimisation problem:
\begin{equation}\label{eq:min}
\min_{\psi \in \rho^n + \mathbb{V}_m} \sum_{i,j=0}^K\bigg[ U(\psi_\iij) + \frac{\gamma}{2} |(\widetilde \nabla_h \psi)_\iij|^2  -V'(\rho^n_\iij) \psi_\iij
 + \frac{M^2}{2\tau_n^2}  \rho^n_\iij |(\nabla_h (G^h_{\rho^n}(\psi - \Psi^n)))_\iij|^2 \bigg]
\end{equation}
where it is to be understood that $\psi$ and $G^h_{\rho^n}(\psi - \Psi^n)$ are extended by \eqref{bc:rho} to $\bar \del \Omega.$
This formulation as a minimisation problem is possible because for any $\vphi \in \mathbb{V}_m$ extended by \eqref{bc:rho}
\begin{multline}\label{eq:Ghtrick}
\frac{1}{2}\frac{d}{d\varepsilon}\sum_{i,j=0}^{K} \rho^n_\iij |(\nabla_h G^h_{\rho^n}(\rho^\np + \veps \vphi - \Psi^n))_\iij|^2\Big|_{\veps=0}\\
=\sum_{i,j=0}^{K} \rho^n_\iij \big( (\nabla_h G^h_{\rho^n}(\rho^\np- \Psi^n))_\iij \big) \cdot
\big( (\nabla_hG^h_{\rho^n}(\vphi))_\iij  \big)
=-\sum_{i,j=0}^{K} (G_{\rho^n}^h(\rho^\np - \Psi^n))_\iij \vphi_\iij.
\end{multline}
The existence of $\rho^\np,$ and thereby $\Lambda^\np,$ follows from the reformulation of \eqref{f:ch} as \eqref{eq:min}.
To show uniqueness let us assume there were two solutions $\rho^\np,\bar \rho^\np \in \rho^n + \mathbb{V}_m$ of \eqref{f:ch} and thereby of  \eqref{eq:EL}.
Due to \eqref{bc:rho} we do not get any boundary terms when performing summation by parts such that
\begin{multline}
0=\sum_{i,j=0}^K \bigg[\Big(U'(\rho^\np_\iij) - U'(\bar \rho^\np_\iij) - \frac{M^2}{\tau_n^2} (G^h_{\rho^n}(\rho^\np - \bar \rho^\np))_\iij\Big)\vphi_\iij \\
+ \gamma (\widetilde\nabla_h (\rho^\np - \bar \rho^\np))_\iij \cdot  (\widetilde\nabla_h \vphi)_\iij  \bigg]
\end{multline}
for all $\vphi \in \mathbb{V}_m$.
We choose $\vphi=\rho^\np-\bar \rho^\np$ and obtain
\begin{multline}\label{eq:uniq}
 0= \sum_{i,j=0}^K\bigg[(U'(\rho^\np_\iij) - U'(\bar \rho^\np_\iij))(\rho^\np_\iij - \bar \rho^\np_\iij) \\
        + \gamma |(\widetilde \nabla_h (\rho^\np -\bar\rho^\np))_\iij|^2  
+\frac{M^2}{\tau_n^2}\rho^n_\iij| (\nabla_h G^h_{\rho^n}(\rho^\np - \bar \rho^\np))_\iij|^2  \bigg]
\end{multline}
by using the second line of \eqref{eq:Ghtrick}. From \eqref{eq:uniq} we infer
\begin{equation}
0\geq \sum_{i,j=0}^K  |(\widetilde\nabla_h (\rho^\np - \bar \rho^\np))_\iij|^2 ,  
\end{equation}
which implies $\rho^\np - \bar\rho^\np \in \mathbb{V}_m \cap \mathbb{U}=\{0\}.$
\end{proof}

Once $\rho^\np,\Lambda^\np$ are determined from \eqref{f:ch}, \eqref{f:Lambda} equation \eqref{f:rhov} explicitly gives $\bv^\np$.
Thus, the scheme \eqref{f:rho} - \eqref{bc:v} is in fact well-posed.

\subsection{Asymptotic consistency of the scheme}
As in the semi-discrete case we commence our study of the properties of the scheme with the low Mach case.
To this end, we assume the following expansions
\begin{equation}
\rho^n_\iij = \rho_{0,i,j}^n + M^2 \rho^n_{1,i,j} + o(M^2) ,\  \bv_\iij^n=\bv^n_{0,i,j} + o(1) ,\
 \Lambda_\iij^n = \Lambda^n_{0,i,j} + M^2 \Lambda^n_{1,i,j} + o(M^2). 
\end{equation}
We also assume a fully discrete analogue of \eqref{hsd}, i.e.,
\begin{equation}\label{hfd}
 W'(\rho_{0,i,j}^0) -  ( \Delta_h \rho_0^0)_\iij  =\text{const},\quad
(\diver_h (\rho_0^0 \bv_0^0))_\iij=0,\quad
\rho_{0,i,j}^0>0,
\tag{H$_{fd}$}
\end{equation}
for all $(i,j)\in\{0,\dots,K\}^2.$

\begin{lemma}[AP property]\label{lem:fd:divergence}
Provided \eqref{hfd} holds, the solution of \eqref{f:rho} - \eqref{bc:v} satisfies
\[ \rho^n_0= \rho_0^0  \quad \text{ and } \quad \diver_h( \rho_0^n \bv_0^n)=0\quad
\text{ for all }n \in \{0,\dots,N\}.\]
\end{lemma}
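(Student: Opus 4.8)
The plan is to mimic the semi-discrete argument of Lemma \ref{lem:sd:rhoconst} by induction on $n$, replacing each continuous operator by its finite-difference analogue and each continuous functional inequality by the corresponding discrete one. For $n=0$ the claim is exactly the hypothesis \eqref{hfd}, so the work lies entirely in the induction step: assuming $\rho_0^n = \rho_0^0$ and $(\diver_h(\rho_0^n \bv_0^n))_\iij = 0$, I want to conclude $\rho_0^\np = \rho_0^0$ and $(\diver_h(\rho_0^\np \bv_0^\np))_\iij = 0$.

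First I would extract the leading order ($M^0$) part of the $\rho$-update \eqref{f:ch} together with \eqref{f:Lambda}. Because the stiff term in \eqref{f:ch} carries the prefactor $\tau_n^2/M^2$, its leading order forces $(\diver_h(\rho^n_0 \nabla_h \Lambda^\np_0))_\iij = 0$ with the ghost-cell boundary conditions; combined with the leading order of \eqref{f:Lambdaalt}, which reads
\begin{equation}
\Lambda^\np_{0,\iij} - U'(\rho^\np_{0,\iij}) + V'(\rho^n_{0,\iij}) + \gamma(\Delta_h \rho^\np_0)_\iij = \text{const},
\end{equation}
this gives the fully discrete analogue of \eqref{eq:inductionstep1}, namely
\begin{equation}
U'(\rho^\np_{0,\iij}) - \gamma(\Delta_h \rho^\np_0)_\iij - V'(\rho^n_{0,\iij}) = \text{const}.
\end{equation}
Using the induction hypothesis $\rho_0^n = \rho_0^0$ and \eqref{hfd} I get the analogue of \eqref{eq:inductionstep2}, and subtracting yields
\begin{equation}
U'(\rho^\np_{0,\iij}) - U'(\rho^n_{0,\iij}) - \gamma(\Delta_h(\rho^\np_0 - \rho^n_0))_\iij = \text{const}
\end{equation}
as in \eqref{eq:inductionstep3}.

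Next I would test this identity against $\rho^\np_0 - \rho^n_0$, i.e. sum over $(i,j)\in\{0,\dots,K\}^2$ after multiplying by $(\rho^\np_0 - \rho^n_0)_\iij$. Since $\rho^\np_0 - \rho^n_0 \in \mathbb{V}_m$ (mass conservation \eqref{eq:masscons} in leading order), the constant on the right integrates to zero. Summation by parts on the $\Delta_h$ term, which produces no boundary contribution under \eqref{bc:rho}, turns the capillary term into $\gamma \sum |(\widetilde\nabla_h(\rho^\np_0 - \rho^n_0))_\iij|^2 \geq 0$, exactly as in the uniqueness computation \eqref{eq:uniq} of Lemma \ref{lem:wpscheme}. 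The monotonicity term $(U'(\rho^\np_0) - U'(\rho^n_0))(\rho^\np_0 - \rho^n_0) \geq 0$ follows from convexity of $U$. Both terms being nonnegative and summing to zero forces $\widetilde\nabla_h(\rho^\np_0 - \rho^n_0) = 0$, whence $\rho^\np_0 - \rho^n_0 \in \mathbb{V}_m \cap \mathbb{U} = \{0\}$, giving $\rho^\np_0 = \rho^n_0 = \rho^0_0$. Finally, the leading order of the mass equation \eqref{f:rho} reads $(\diver_h(\rho^\np_0 \bv^\np_0))_\iij = (\rho^n_0 - \rho^\np_0)_\iij / \tau_n = 0$, closing the induction.

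The main obstacle — really the only nonroutine point — is verifying that the discrete summation-by-parts steps produce \emph{no} boundary terms, so that the continuous argument transfers verbatim. This hinges on the ghost-cell extensions \eqref{bc:rho} for $\rho$ and $\Lambda$: I must check that $B^h_{\rho^n}$ vanishing against $\mathbb{V}_m$ test functions, and the discrete integration by parts relating $\Delta_h$ to $\widetilde\nabla_h$, hold cleanly under these extensions, just as asserted in the well-posedness discussion preceding Lemma \ref{lem:wpscheme}. Once this discrete-calculus bookkeeping is in place, the coercivity-plus-convexity argument is identical in structure to the semi-discrete proof, so I expect no genuine difficulty beyond carefully tracking the ghost values at $\bar\del\Omega$.
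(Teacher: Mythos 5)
Your proposal is correct and follows essentially the same induction as the paper: leading order of the update forces $\Lambda_0^{n+1}\in\mathbb{U}$, subtracting the discrete Euler--Lagrange identities and testing with $\rho_0^{n+1}-\rho_0^n\in\mathbb{V}_m$ gives $\sum_{i,j}|(\widetilde\nabla_h(\rho_0^{n+1}-\rho_0^n))_{i,j}|^2\leq 0$ via convexity of $U$ and boundary-term-free summation by parts under \eqref{bc:rho}, and the leading order of \eqref{f:rho} closes the step. The only (harmless) deviation is that you extract the constancy of $\Lambda_0^{n+1}$ from the stiff term in \eqref{f:ch} together with coercivity of $B^h_{\rho_0^n}$, whereas the paper reads it off directly from the $M^{-2}$ term in the momentum equation \eqref{f:rhov} using $\rho_0^n>0$.
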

\begin{proof}
The proof goes along the same lines as that of Lemma \ref{lem:sd:rhoconst}.
 It is based on induction and for $n=0$ the assertion coincides with \eqref{hfd}.
For the induction step we infer
from the leading order of \eqref{f:rhov} and \eqref{bc:rho} that $\Lambda^\np_0 \in \mathbb{U}$ such that because of \eqref{bc:rho}
\begin{equation*} 
\sum_{i,j=0}^K \Big( (U'(\rho^\np_{0,i,j}) - V'(\rho^n_{0,i,j}) )\psi_\iij + \gamma 
(\widetilde \nabla_h \rho_0^\np)_\iij \cdot(\widetilde\nabla_h \psi)_\iij\Big) =0
\end{equation*}
for all $\psi \in \mathbb{V}_m.$
By induction hypothesis this gives
\begin{equation*} 
\sum_{i,j=0}^K\Big( (U'(\rho^\np_{0,i,j}) - U'(\rho^n_{0,i,j}) )\psi_\iij + \gamma 
(\widetilde \nabla_h (\rho_0^\np-\rho_0^n))_\iij \cdot(\widetilde\nabla_h \psi)_\iij\Big) =0.
\end{equation*}
Choosing $\psi=\rho^\np_0 - \rho^n_0$ we obtain because of the convexity of $U$
\begin{equation}\label{0408} 
\sum_{i,j=0}^K  
|(\widetilde \nabla_h (\rho_0^\np - \rho_0^n))_\iij|^2 \leq0.
\end{equation}
Combining \eqref{0408} and \eqref{eq:masscons} we find $\rho^\np_0 - \rho^n_0 \in \mathbb{U} \cap \mathbb{V}_m = \{0\}.$
This proves the first claim of the lemma.
From the leading order of \eqref{f:rho} we obtain
\begin{equation}
\diver_h (\rho_0^\np \bv_0^\np) =0,
\end{equation}
which is the inductive step for the second assertion of the lemma.
\end{proof}

Thus, the scheme approximates the correct equations in the low Mach limit. In particular, as $\Lambda^\np_0 \in \mathbb{U}$, we have the following equation for $\bv_0^\np$
\begin{equation}\label{f:rhovnull}
\rho^0_\nij (\bv^\np_\nij - \bv^n_\nij )
+ \tau_n \widetilde \diver_h (\rho_0^0 \bv_0^n \otimes \bv_0^n)_\iij
+ \tau_n \rho^0_\nij  (\nabla_h \Lambda_1^\np)_\iij
 - \mu_h \tau_n (\Delta_h \bv^n_0)_\iij ={\bf 0}.
\end{equation}

\subsection{Stability in the low Mach limit}\label{sec:fd:lMstab}
Before we turn to the stability properties of the scheme let us consider the way we discretised $\diver(\rho \bv \otimes \bv)$ in 
\eqref{f:rhov}.
The operator $\widetilde \diver_h$ is deliberately constructed in such a way that the following lemma can be exploited.

\begin{lemma}[Compatibility]\label{lem:Seite8}
For any $(\rho_\iij)_{(\iij) \in \{0,\dots,K\}^2}, (\bv_\iij)_{(\iij) \in \{0,\dots,K\}^2}$ extended according to the boundary conditions \eqref{bc:rho}, \eqref{bc:v}
the following identity is satisfied
\begin{equation*}
\sum_{i,j=0}^K  ( \widetilde \diver_h (\rho \bv \otimes \bv))_\iij\cdot \bv_{i,j}
= \sum_{i,j=0}^K \frac{1}{2} |\bv_\iij|^2 ( \diver_h (\rho \bv))_\iij.
\end{equation*}
\end{lemma}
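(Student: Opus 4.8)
The plan is to verify the identity by direct computation, exploiting the telescoping/summation-by-parts structure that the operator $\widetilde \diver_h$ was designed to produce. First I would write out the left-hand side using the definition \eqref{def:divergence} of $\widetilde \diver_h$, dotting each of the four constituent terms with $\bv_\iij$ and summing over $(i,j) \in \{0,\dots,K\}^2$. The key structural observation is that the four terms split naturally into an $x$-part (the terms carrying the factors $\rho_{\cdot} u_{\cdot}$, involving the $i\pm1$ neighbours) and a $y$-part (the terms carrying $\rho_{\cdot} w_{\cdot}$, involving the $j\pm1$ neighbours). I expect these two directions to be treated completely symmetrically, so it suffices to carry out the argument for the $x$-part and then invoke symmetry for the $y$-part.

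For the $x$-part, I would isolate the sum
\[
\frac{1}{4h} \sum_{i,j=0}^K \Big[ (\bv_\iij + \bv_\ipj)(\rho_\iij u_\iij + \rho_\ipj u_\ipj) - (\bv_\iij + \bv_\imj)(\rho_\iij u_\iij + \rho_\imj u_\imj)\Big]\cdot \bv_\iij.
\]
The crucial device is to recognise this as a discrete summation by parts: the term indexed by the forward neighbour $(i{+}1,j)$ in the first bracket should be reindexed (shift $i \mapsto i-1$) so that it pairs against the backward-difference term, leaving only a symmetric flux across each edge $\{(i,j),(i{+}1,j)\}$. Collecting the edge contributions, the flux across a horizontal edge will be $(\rho_\iij u_\iij + \rho_\ipj u_\ipj)(\bv_\iij + \bv_\ipj)\cdot(\bv_\iij - \bv_\ipj)$ up to sign and the factor $\frac{1}{4h}$, and since $(\bv_\iij + \bv_\ipj)\cdot(\bv_\iij - \bv_\ipj) = |\bv_\iij|^2 - |\bv_\ipj|^2$, the vectorial structure collapses to a difference of $|\bv|^2$. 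Reindexing this difference once more and using the definition \eqref{def:operators3} of $\diver_h$ reassembles precisely $\tfrac12 |\bv_\iij|^2 (\diver_h(\rho\bv))_\iij$ restricted to its $x$-contribution.

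The main obstacle will be the careful bookkeeping of the \emph{boundary} edges: the reindexing shifts the summation range, so the interior telescoping is clean but the edges at $i=0$ and $i=K$ require the ghost-cell values. Here the boundary conditions \eqref{bc:v}, which set $\bv_{-1,j} = -\bv_{0,j}$ and $\bv_{K+1,j} = -\bv_{K,j}$, are exactly what is needed: they make $|\bv_{-1,j}|^2 = |\bv_{0,j}|^2$ (and similarly at the far edge), so the extra edge fluxes produced by the shift cancel against their reflected ghost counterparts, and no spurious boundary terms survive. I would check this cancellation explicitly for one endpoint, noting that \eqref{bc:rho} simultaneously supplies the needed $\rho$-ghost values so that the reassembled $(\diver_h(\rho\bv))_\iij$ on the right-hand side is itself well-defined up to the boundary. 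Once the $x$-direction identity holds with the boundary terms vanishing, the $y$-direction follows verbatim by swapping the roles of $(i\pm1,j)$ with $(i,j\pm1)$ and $u$ with $w$, and adding the two directional identities yields the claim.
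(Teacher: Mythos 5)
Your proposal is correct and takes essentially the same route as the paper: a direct computation in which the cross terms are organised into telescoping (edge-based) sums whose boundary contributions vanish because of the ghost-cell conditions \eqref{bc:rho}, \eqref{bc:v}. The paper merely packages the calculation differently, as the pointwise identity $(\widetilde\diver_h(\rho\bv\otimes\bv))_\iij\cdot\bv_\iij=\tfrac12|\bv_\iij|^2(\diver_h(\rho\bv))_\iij+\tfrac{1}{4h}(c_\iij-c_\imj+\widetilde c_\iij-\widetilde c_\ijm)$ whose remainder telescopes to zero upon summation, whereas you reindex both sides onto edges before comparing them.
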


\begin{proof}
A straightforward calculation shows
\begin{equation}
 ( \widetilde \diver_h (\rho \bv \otimes \bv))_\iij \cdot \bv_{i,j}
=  \frac{1}{2}|\bv_\iij|^2 ( \diver_h (\rho \bv))_\iij
+ \frac{1}{4h}\big( c_\iij - c_\imj + \widetilde c_\iij - \widetilde c_\ijm  \big)
\end{equation}
with
\begin{equation*}
c_\iij = \bv_\ipj \cdot \bv_\iij ( \rho_\iij u_\iij + \rho_\ipj u_\ipj), \quad
\widetilde c_\iij = \bv_\ijp \cdot \bv_\iij (\rho_\iij w_\iij + \rho_\ijp w_\ijp).
\end{equation*}
Summing over $i,j=0,\dots,K$ completes the proof, because of \eqref{bc:rho} and \eqref{bc:v}.
\end{proof}

We are in position to prove the stability of the low Mach limit of the fully discrete scheme.

\begin{lemma}[Kinetic energy estimate]\label{lem:fd:stab}
Provided the timestep satisfies
\begin{equation}
 \label{eq:cfl}
\tau_n < \frac{h}{\|\bv_0^n\|_\infty}  \frac{ \rho_0^{\text{min}}}{ 9\|\rho_0^0\|^2_\infty + 8} \quad \text{ with }
 \rho_0^{\text{min}}:= \min_{(i,j) \in \{0,\dots,K\}^2} \rho_\nij^0,
\end{equation}
the  scheme  \eqref{f:rho} - \eqref{bc:v}  with $\mu_h := h \|\bv_0^n\|_\infty$ fulfils the following stability estimate
\begin{multline}\label{eq:fd:stab}
 \sum_{i,j=0}^{K} \rho_{0,i,j}^{0} |\bv_{0,i,j}^{n+1}|^2 \leq \sum_{i,j=0}^{K} \rho_{0,i,j}^0 |\bv_{0,i,j}^n|^2 \\
+ \frac{\mu_h \tau_n}{h^2} \sum_{(i,j)\in \del \Omega} | \bv_\nij^\np - \bv_\nij^n|^2 + 
 4\frac{\mu_h \tau_n}{h^2} \sum_{(i,j)\in \del \Omega} \bv_\nij^n(\bv_\nij^n - \bv_\nij^\np).
\end{multline}
\end{lemma}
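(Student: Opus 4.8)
\emph{Strategy.} The plan is to reproduce, at the fully discrete level, the kinetic-energy computation behind Lemma \ref{lem:sd:stability}, but testing the low-Mach velocity update \eqref{f:rhovnull} against $\bv_0^{n+1}$ \emph{alone} rather than against $\bv_0^{n+1}+\bv_0^n$. Testing against $\bv_0^{n+1}$ produces, in addition to the telescoping energy difference, the surplus $\tfrac12\sum_{i,j}\rho_{0,i,j}^0|\bv_{0,i,j}^{n+1}-\bv_{0,i,j}^n|^2$; as in the generic-Mach estimate \eqref{eq:stab26} this surplus is exactly the budget needed later to absorb the velocity-increment contributions. Concretely I would take the discrete $\ell^2$ inner product of \eqref{f:rhovnull} with $\bv_0^{n+1}$, sum over $(i,j)\in\{0,\dots,K\}^2$ with all grid functions extended by \eqref{bc:rho}, \eqref{bc:v}, and use the polarisation identity $\rho_0^0(\bv_0^{n+1}-\bv_0^n)\cdot\bv_0^{n+1}=\tfrac12\rho_0^0(|\bv_0^{n+1}|^2-|\bv_0^n|^2)+\tfrac12\rho_0^0|\bv_0^{n+1}-\bv_0^n|^2$ on the temporal term.

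\emph{Vanishing and convective terms.} Two contributions are then disposed of. Summation by parts turns the pressure term $\tau_n\sum\rho_0^0(\nabla_h\Lambda_1^{n+1})\cdot\bv_0^{n+1}$ into $-\tau_n\sum\Lambda_1^{n+1}\,\diver_h(\rho_0^0\bv_0^{n+1})$, with boundary contributions cancelling by \eqref{bc:rho}, \eqref{bc:v}, and this vanishes by the divergence constraint $\diver_h(\rho_0^0\bv_0^{n+1})=0$ of Lemma \ref{lem:fd:divergence}. For the convective term I would write $\bv_0^{n+1}=\bv_0^n+(\bv_0^{n+1}-\bv_0^n)$: Lemma \ref{lem:Seite8} together with $\diver_h(\rho_0^0\bv_0^n)=0$ kills the $\bv_0^n$-part, leaving $\tau_n\sum\widetilde\diver_h(\rho_0^0\bv_0^n\otimes\bv_0^n)_\iij\cdot(\bv_0^{n+1}-\bv_0^n)_\iij$.

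\emph{Viscous term.} Summation by parts on $\Delta_h$ converts $-\mu_h\tau_n\sum\Delta_h\bv_0^n\cdot\bv_0^{n+1}$ into the interior damping $\mu_h\tau_n\sum|D_h\bv_0^n|^2$, the cross term $\mu_h\tau_n\sum D_h\bv_0^n:(D_h\bv_0^{n+1}-D_h\bv_0^n)$, and explicit boundary contributions; evaluating the latter with the odd reflection \eqref{bc:v} is exactly what yields the two boundary sums over $\del\Omega$ appearing on the right-hand side of \eqref{eq:fd:stab}. At this stage I would have an identity in which $\tfrac12\sum\rho_0^0|\bv_0^{n+1}-\bv_0^n|^2$ and $\mu_h\tau_n\sum|D_h\bv_0^n|^2$ sit with favourable sign against the convective remainder and the cross term.

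\emph{Absorption (the crux).} The decisive step is to show these two damping budgets dominate. The key observation is that, since $\diver_h(\rho_0^0\bv_0^n)=0$, the non-derivative part of $\widetilde\diver_h(\rho_0^0\bv_0^n\otimes\bv_0^n)$ cancels (the discrete analogue of $\diver(\rho\bv\otimes\bv)=\rho(\bv\cdot\nabla)\bv$), so that $|\widetilde\diver_h(\rho_0^0\bv_0^n\otimes\bv_0^n)_\iij|$ is controlled by $\|\rho_0^0\|_\infty\|\bv_0^n\|_\infty$ times nearby first differences of $\bv_0^n$, i.e.\ by $(D_h\bv_0^n)$-values. Young's inequality then splits the convective remainder into a multiple of $\mu_h\tau_n\sum|D_h\bv_0^n|^2$ and a multiple of $\sum\rho_0^0|\bv_0^{n+1}-\bv_0^n|^2$, while the cross term is handled by Young's inequality followed by the inverse inequality \eqref{eq:invp}, which trades $\sum|D_h(\bv_0^{n+1}-\bv_0^n)|^2$ for $\tfrac{8}{h^2}\sum|\bv_0^{n+1}-\bv_0^n|^2$. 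With the choice $\mu_h=h\|\bv_0^n\|_\infty$ every coefficient becomes homogeneous in $\tfrac{\tau_n\|\bv_0^n\|_\infty}{h}$, and the timestep restriction \eqref{eq:cfl} is calibrated precisely so that the convective part (contributing the $9\|\rho_0^0\|_\infty^2$) is absorbed into the $D_h$-damping and the inverse-inequality remainder (contributing the $8$) is absorbed into the increment budget $\tfrac12\sum\rho_0^0|\bv_0^{n+1}-\bv_0^n|^2$. I expect this balancing --- obtaining the derivative factor in the convective estimate from the divergence constraint, splitting it across the \emph{two} distinct budgets so that both close simultaneously under a single CFL bound, and at the same time correctly assigning every ghost-cell contribution to the boundary sums without leaving an interior remainder --- to be the main difficulty; once it is carried out, collecting terms and undoing the rescaling implied by testing against $\bv_0^{n+1}$ yields \eqref{eq:fd:stab}.
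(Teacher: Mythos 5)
Your proposal is correct and follows essentially the same route as the paper: the paper tests \eqref{f:rhovnull} separately against $\bv_0^{n+1}+\bv_0^n$ (for the energy identity) and against $\bv_0^{n+1}-\bv_0^n$ (to generate the increment budget), whereas you combine these into a single test against $\bv_0^{n+1}$, which is just a linear combination of the same two tests. All the decisive ingredients coincide --- Lemma \ref{lem:Seite8} and the divergence constraint to eliminate the pressure and leading convective contributions, the rewriting of $\widetilde\diver_h(\rho_0^0\bv_0^n\otimes\bv_0^n)$ via $\diver_h(\rho_0^0\bv_0^n)=0$ to bound it by first differences of $\bv_0^n$ (the paper's \eqref{eq:fdstab3b}, source of the $9\|\rho_0^0\|_\infty^2$), Young's inequality, the inverse inequality \eqref{eq:invp} for the cross term, and the calibration $\mu_h=h\|\bv_0^n\|_\infty$ under \eqref{eq:cfl} --- so the absorption closes exactly as you describe.
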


\begin{remark}[Boundary conditions]
 The boundary terms in \eqref{eq:fd:stab} could be avoided if we enforced $\bv_\iij = {\bf 0}$ for $(i,j) \in \del \Omega \cup \bar \del \Omega.$
However, this would lead to $\rho_{0,0},\rho_{0,K},\rho_{K,0},\rho_{K,K}$ being (exactly) constant in time, which would be a very crude numerical artefact.
In any case the possible increase in energy is  expected to be of order $\mathcal{O}(\tau^2), $
as the boundary has codimension $1$ and $|\bv_\nij^n - \bv_\nij^\np|$ is expected to be of order $\mathcal{O}(\tau_n). $
\end{remark}

\begin{proof}[Proof of Lemma \ref{lem:fd:stab}]
Let us note the following consequences of the boundary conditions \eqref{bc:rho}, \eqref{bc:v} and  Lemmas \ref{lem:fd:divergence} and \ref{lem:Seite8}:
\begin{align}\label{eq:divtrick}
 \sum_{i,j=0}^K (\widetilde \diver_h (\rho_0^0 \bv_0^n \otimes \bv_0^n))_\iij \cdot \bv_\nij^n &=0\\
 \label{eq:Lambdatrick}\sum_{i,j=0}^K \rho_\nij^0 \bv_\nij^n \cdot (\nabla_h \Lambda_1^m)_\iij &=0
\end{align}
for all $m,n \in \{0,\dots,N\}.$
 We multiply \eqref{f:rhovnull} by $(\bv_\nij^\np+ \bv_\nij^n)$ and sum over $i,j=0,\dots,K$  such that we obtain using \eqref{eq:divtrick} 
and \eqref{eq:Lambdatrick}:
\begin{multline}\label{eq:fdstab1}
\sum_{i,j=0}^{K} \rho^0_\nij (|\bv^\np_\nij|^2 - |\bv^n_\nij|^2 )
+  \tau_n (\widetilde \diver_h (\rho_0^0 \bv_0^n \otimes \bv_0^n))_\iij \cdot (\bv_\nij^\np - \bv_\nij^n) \\
 - \mu_h\tau_n (\Delta_h \bv_0^n)_\iij \cdot (\bv^\np_\nij + \bv^n_\nij) =0.
\end{multline}
Due to the boundary conditions \eqref{bc:v} we have the following summation by parts results
\begin{equation}\label{pi:delta}
 \begin{split}
- \sum_{i,j=0}^{K} (\Delta_h \bv_0^n)_\iij \cdot  \bv^n_\nij &= \sum_{i,j=0}^{K-1}  |(D_h \bv_0^n)_\iij |^2 + \frac{2}{h^2} \sum_{(i,j)\in \del\Omega} |\bv^n_\nij|^2\\
- \sum_{i,j=0}^{K} (\Delta_h \bv_0^n)_\iij \cdot  \bv^\np_\nij &= \sum_{i,j=0}^{K-1}  (D_h \bv_0^n)_\iij:(D_h \bv_0^\np)_\iij 
 + \frac{2}{h^2} \sum_{(i,j)\in \del \Omega} \bv^n_\nij\cdot \bv^\np_\nij \\
 &\geq \sum_{i,j=0}^{K-1}  (D_h \bv_0^n)_\iij:(D_h \bv_0^\np)_\iij - \frac{1}{h^2} \sum_{(i,j)\in \del \Omega} |\bv^n_\nij- \bv^\np_\nij |^2.
 \end{split}
\end{equation}

Inserting \eqref{pi:delta} into \eqref{eq:fdstab1} we obtain
\begin{multline}\label{eq:fdstab2}
\sum_{i,j=0}^{K} \rho^0_\nij (|\bv^\np_\nij|^2 - |\bv^n_\nij|^2 )
+ \tau_n  (\widetilde \diver_h (\rho_0^0 \bv_0^n \otimes \bv_0^n))_\iij \cdot (\bv_\nij^\np - \bv_\nij^n) \\
 + \sum_{i,j=0}^{K-1} 2\mu_h\tau_n |(D_h \bv^n_0)_\iij|^2  - \mu_h \tau_n (D_h \bv^n_0)_\iij : (D_h (\bv^n_0 - \bv^\np_0))_\iij\\
 \leq
\frac{\mu_h \tau_n}{h^2} \sum_{(i,j)\in \del \Omega} |\bv^n_\nij- \bv^\np_\nij |^2 .
\end{multline}
To estimate the terms involving $\bv_0^\np - \bv_0^n$ we test \eqref{f:rhovnull} by $\bv_0^\np - \bv_0^n$. We find
using \eqref{eq:Lambdatrick} and \eqref{pi:delta}
 \begin{multline}
  \label{eq:fdstab3}
 \sum_{i,j=0}^{K} \rho^0_\nij | \bv_\nij^\np -\bv_\nij^n|^2 + \tau_n (\widetilde\diver_h (\rho_0^0 \bv_0^n \otimes \bv_0^n))_\iij \cdot (\bv^\np_\nij -\bv_\nij^n)\\
 +\sum_{i,j=0}^{K-1} \tau_n \mu_h (D_h \bv_0^n)_\iij : (D_h(\bv_0^\np-\bv_0^n))_\iij
\leq 2\frac{\mu_h \tau_n}{h^2} \sum_{(i,j)\in \del \Omega} \bv^n_\nij \cdot(\bv^n_\nij- \bv^\np_\nij ).
 \end{multline}
Due to Lemma \ref{lem:fd:divergence}  we have
\begin{multline}
4h (\widetilde \diver_h (\rho_0^0 \bv_0^n \otimes \bv_0^n))_\iij=
 4h (\widetilde \diver_h (\rho_0^0 \bv_0^n \otimes \bv_0^n))_\iij - 4h \bv_\nij^n (\diver_h(\rho_0^0\bv_0^n))_\iij=\\
(  \bv_\nipj^n - \bv_\nij^n)( \rho_\nij^0 u_\nij^n + \rho_\nipj^0 u_\nipj^n)
                - ( \bv_\nimj^n -\bv_\nij^n)( \rho_\nij^0 u_\nij^n + \rho_\nimj^0 u_\nimj^n)\\
                + ( \bv_\nijp^n-\bv_\nij^n)( \rho_\nij^0 w_\nij^n + \rho_\nijp^0 w_\nijp^n)
                - ( \bv_\nijm^n -\bv_\nij^n)( \rho_\nij^0 w_\nij^n + \rho_\nijm^0 w_\nijm^n)
\end{multline}
 for all $(i,j) \in \{0,\dots,K\}^2$ and a straightforward calculation  shows
\begin{equation}\label{eq:fdstab3b}
 \sum_{i,j=0}^{K} ((\widetilde \diver_h (\rho_0^0 \bv_0^n \otimes \bv_0^n))_\iij)^2 \leq
 \frac{9}{2} \|\rho_0^0\|_\infty^2 \|\bv_0^n\|_\infty^2 \sum_{i,j=0}^{K-1} |(D_h \bv_0^n)_\iij|^2.
\end{equation}
Employing   \eqref{eq:fdstab3b}  in  \eqref{eq:fdstab3} we obtain for arbitrary $\veps>0$
 \begin{multline}
  \label{eq:fdstab4}
 \sum_{i,j=0}^{K}\Big( \rho^0_\nij | \bv_\nij^\np -\bv_\nij^n|^2 - \tau_n \frac{\veps}{2} \|\rho_0^0\|_\infty | \bv_\nij^\np -\bv_\nij^n|^2\Big)\\
- \sum_{i,j=0}^{K-1}\tau_n \Big( \frac{9}{4\veps} \|\rho_0^0\|_\infty \|\bv_0^n\|_\infty^2 |(D_h \bv_0^n)_\iij|^2
+ \frac{ \mu_h}{2} |(D_h \bv_0^n)_\iij|^2 + \frac{ \mu_h }{2} |(D_h(\bv_0^\np - \bv_0^n))_\iij|^2 \Big)\\
 \leq 
2\frac{\mu_h \tau_n}{h^2} \sum_{(i,j)\in \del \Omega} \bv^n_\nij \cdot(\bv^n_\nij- \bv^\np_\nij ).
 \end{multline}
By the inverse inequality \eqref{eq:invp} we find
\begin{multline}\label{eq:fdstab5}
  \sum_{i,j=0}^{K} \rho_\nij^0 | \bv_\nij^\np -\bv_\nij^n|^2 \leq
\tau_n \left(  \frac{\veps}{2} \|\rho_0^0\|_\infty  + \frac{4\mu_h}{h^2}\right) \sum_{i,j=0}^{K} | \bv_\nij^\np -\bv_\nij^n|^2\\
+\tau_n \left( \frac{9}{4\veps} \|\rho_0^0\|_\infty \|\bv_0^n\|_\infty^2 + \frac{\mu_h}{2}\right)  
 \sum_{i,j=0}^{K-1} | (D_h \bv_0^n)_\iij|^2 +
 2\frac{\mu_h \tau_n}{h^2} \sum_{(i,j)\in \del \Omega} \bv^n_\nij \cdot(\bv^n_\nij- \bv^\np_\nij ).
\end{multline}
Let us choose $\veps =\tfrac{9}{h} \|\rho_0^0\|_\infty \|\bv_0^n\|_\infty.$
 Then, because of \eqref{eq:cfl} we have
\[   \tau_n\Big( \frac{\veps}{2} \|\rho_0^0\|_\infty  + \frac{4\mu_h}{h^2}\Big) \leq \frac{\rho_0^{\text{min}}}{2}\]
such that \eqref{eq:fdstab5} implies
\begin{multline}\label{eq:fdstab6}
\rho_0^{\text{min}} \sum_{i,j=0}^{K} | \bv_\nij^\np -\bv_\nij^n|^2 \\
\leq \tau_n \Big( \frac{h}{2}  \|\bv_0^n\|_\infty + \mu_h \Big)  
 \sum_{i,j=0}^{K-1} | (D_h \bv_0^n)_\iij|^2 
+ 4\frac{\mu_h \tau_n}{h^2} \sum_{(i,j)\in \del \Omega} \bv^n_\nij \cdot(\bv^n_\nij- \bv^\np_\nij ).
\end{multline}
Returning to  \eqref{eq:fdstab2} we find upon using \eqref{eq:fdstab3b} that for any $\bar \veps>0$
\begin{equation}\label{eq:fdstab7}
 \begin{split}
&   \sum_{i,j=0}^{K} \rho_\nij^0 |\bv_\nij^\np|^2 - \sum_{i,j=0}^{N} \rho_\nij^0 |\bv_\nij^n|^2\\
&\leq 
   \sum_{i,j=0}^{K} \Big(  \tau_n \frac{\bar \veps}{2}|\widetilde \diver_h (\rho_0^0 \bv_0^n \otimes \bv_0^n)_\iij|^2 + \frac{\tau_n}{2\bar \veps} |\bv_\nij^\np -\bv_\nij^n|^2\Big) 
+ \frac{\mu_h \tau_n}{h^2}\!\! \sum_{(i,j)\in \del \Omega} \!\!|\bv^n_\nij- \bv^\np_\nij |^2\\
&\quad  -\sum_{i,j=0}^{K-1} \Big(2\tau_n \mu_h |(D_h \bv_0^n)_\iij|^2 - \tau_n \mu_h |(D_h \bv_0^n)_\iij|^2 - \frac{\tau_n \mu_h}{4} | (D_h(\bv_0^\np - \bv_0^n))_\iij|^2\Big)\\
&=- \left( \tau_n \mu_h- \frac{9\tau_n \bar \veps}{4} \|\rho_0^0\|_\infty^2 \|\bv_0^n\|_\infty^2\right)\sum_{i,j=0}^{K-1}  |(D_h \bv_0^n)_\iij|^2\\
&\qquad+ \left( \frac{\tau_n}{2\bar \veps} + \frac{2\tau_n \mu_h}{h^2} \right) \sum_{i,j=0}^{K}  | \bv_\nij^\np - \bv_\nij^n|^2
+\frac{\mu_h \tau_n}{h^2} \sum_{(i,j)\in \del \Omega} |\bv^n_\nij- \bv^\np_\nij |^2.
 \end{split}
\end{equation}
Inserting \eqref{eq:fdstab6} into \eqref{eq:fdstab7} we get because of $\mu_h =\|\bv_0^n\|_\infty h$
\begin{multline}\label{eq:fdstab8}
  \sum_{i,j=0}^{K}\Big( \rho_\nij^0 |\bv_\nij^\np|^2 - \rho_\nij^0 |\bv_\nij^n|^2\Big)\\
\leq -\tau_n \left(  \mu_h- \frac{9 \bar \veps}{4} \|\rho_0^0\|_\infty^2 \|\bv_0^n\|_\infty^2
- \left( \frac{\tau_n}{2\bar\veps} + \frac{2\tau_n \mu_h}{h^2} \right) 
\frac{3h}{2\rho_0^{\text{min}}} \|\bv_0^n\|_\infty
\right)
\sum_{i,j=0}^{K-1}  |(D_h \bv_0^n)_\iij|^2\\
+ 
\frac{\mu_h \tau_n}{h^2} \sum_{(i,j)\in \del\Omega} |\bv^n_\nij- \bv^\np_\nij |^2
+ \Big(\frac{2\tau_n}{\bar \veps \rho_0^{\text{min}}}+ \frac{8\tau_n \mu_h}{h^2 \rho_0^{\text{min}}}   \Big)
\frac{\mu_h \tau_n}{h^2} \sum_{(i,j)\in \del \Omega} \bv^n_\nij \cdot(\bv^n_\nij- \bv^\np_\nij ).
 \end{multline}
Let us  choose $\bar \veps := \frac{2h}{9 \|\rho_0^0\|_\infty^2 
\|\bv_0^n\|_\infty}.$ Then, \eqref{eq:fdstab8} and \eqref{eq:cfl} imply
\begin{multline}\label{eq:fdstab9}
  \sum_{i,j=0}^{K}\Big( \rho_\nij^0 |\bv_\nij^\np|^2 - \rho_\nij^0 |\bv_\nij^n|^2\Big)\\
\leq -\tau_n\|\bv_0^n\|_\infty \frac{h}{2}\Big( 1
- 3 \Big(\frac{9\|\rho_0^0\|_\infty^2 }{4h} + \frac{2}{h} \Big) \frac{\tau_n}{\rho_0^{\text{min}}}
  \|\bv_0^n\|_\infty\Big)\sum_{i,j=0}^{K-1}  |(D_h \bv_0^n)_\iij|^2\\
 + \frac{\mu_h \tau_n}{h^2} \sum_{(i,j)\in \del \Omega} |\bv^n_\nij- \bv^\np_\nij |^2
+ 2\frac{\mu_h \tau_n}{h^2} \sum_{(i,j)\in \del \Omega} \bv^n_\nij \cdot(\bv^n_\nij- \bv^\np_\nij ).
 \end{multline}
Therefore, the assertion of the Lemma follows upon applying our assumption on $\tau_n$, i.e., \eqref{eq:cfl} again.
\end{proof}

\subsection{Stability for generic Mach numbers}
In this section we investigate the stability of the fully discrete scheme for generic Mach numbers.
As in the semi--discrete case the scheme does not necessarily diminish energy over time but
 the energy increase per timestep is  $\mathcal{O}(\tau^2).$
Therefore, for any given time interval the increase in energy over this interval goes to zero for $\tau$ going to zero.

\begin{lemma}[Fully discrete energy estimate]\label{lem:fdns}
For 
\[  \tau_n <\frac{h \|\bv^n\|_\infty  \rho^n_{\min}}{8(2\|\rho^\np\|_\infty^2(\|\bv^n\|_\infty + \|\bv^\np\|_\infty)^2 + \|\bv^n\|^2_\infty)} 
 \quad \text{ and } \quad \mu_h = \|\bv^n\|_\infty h,\] 
with $\rho^n_{\min}:= \min_{i,j=0,\dots,K} \rho^n_\iij, $ the solution $(\rho_\iij^{n+1},\bv_\iij^{n+1})$ of \eqref{f:rho}--\eqref{bc:v} satisfies
\begin{equation}\begin{split}
& \sum_{i,j=0}^K \Big(\frac{1}{M^2}\big(W(\rho_\iij^\np) + \frac{\gamma}{2}|(\widetilde \nabla_h \rho^\np)_\iij|^2\big) + \frac{1}{2} \rho_\iij^\np |\bv_\iij^\np|^2\Big)\\
- &  \sum_{i,j=0}^K \Big(\frac{1}{M^2}\big(W(\rho_\iij^n) + \frac{\gamma}{2}|(\widetilde \nabla_h \rho^n)_\iij|^2\big) + \frac{1}{2} \rho_\iij^n |\bv_\iij^n|^2\Big)\\
\leq 
& 8\tau_n^2\sum_{i,j=0}^K (\diver_h(\rho^\np \bv^\np)_\iij)^2 
+ \frac{\tau_n^2}{2\kappa_V M^2} \|\bv^\np\|_\infty^2 \sum_{i,j=0}^K | (\nabla_h \Lambda^\np)_\iij|^2\\
&+ \frac{ \tau_n}{h}\| \bv^n \|_\infty \sum_{(i,j)\in \del \Omega} |\bv^n_\iij- \bv^\np_\iij |^2.
\end{split}\end{equation}
\end{lemma}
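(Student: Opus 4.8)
The plan is to reproduce, at the fully discrete level, the energy balance of Lemma~\ref{lem:sdns}, replacing each integration by parts by the discrete summation by parts licensed by the ghost-value boundary conditions \eqref{bc:rho}, \eqref{bc:v}, and using two genuinely discrete tools: the compatibility identity of Lemma~\ref{lem:Seite8} for the convective term and the inverse inequality \eqref{eq:invp} for the viscous difference term. Concretely, I would test the discrete mass balance \eqref{f:rho} with $\tfrac{1}{M^2}\Lambda^\np_\iij - \tfrac12 |\bv^\np_\iij|^2$ and the discrete momentum balance \eqref{f:rhov} with $\bv^\np_\iij$, substitute $\Lambda^\np_\iij$ from \eqref{f:Lambdaalt} into the pointwise part, and sum both identities over $(i,j)\in\{0,\dots,K\}^2$; this produces the exact discrete counterpart of \eqref{eq:stab19}, which I then dissect term by term.

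Several contributions carry over verbatim from the semi-discrete proof. The convexity of $U$ and the $\kappa_V$-strong convexity of $V$ give the discrete form of \eqref{eq:stab20}, yielding $W(\rho^\np_\iij)-W(\rho^n_\iij)$ plus the surplus $-\tfrac{\kappa_V}{2}(\rho^\np_\iij-\rho^n_\iij)^2$. Summation by parts for $\gamma(\Delta_h\rho^\np)_\iij$ with \eqref{bc:rho} and discrete Young reproduce \eqref{eq:stab22}, i.e. a lower bound $\tfrac{\gamma}{2}\big(|(\widetilde\nabla_h\rho^\np)_\iij|^2-|(\widetilde\nabla_h\rho^n)_\iij|^2\big)$. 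For the pressure--Lagrange coupling I would substitute $\tau_n(\diver_h(\rho^\np\bv^\np))_\iij=\rho^n_\iij-\rho^\np_\iij$ from \eqref{f:rho}, as in \eqref{eq:stab23}, and calibrate Young's inequality with $\kappa_V$ so that the emerging $\tfrac{\kappa_V}{2M^2}(\rho^\np_\iij-\rho^n_\iij)^2$ exactly cancels the strong-convexity surplus, leaving the stated term $\tfrac{\tau_n^2}{2\kappa_V M^2}\|\bv^\np\|_\infty^2\,|(\nabla_h\Lambda^\np)_\iij|^2$ after bounding $|\bv^\np_\iij|$ by $\|\bv^\np\|_\infty$. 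The convective term is treated by writing $\bv^\np=\bv^n+(\bv^\np-\bv^n)$: the $\bv^n$-part of $\sum(\widetilde\diver_h(\rho^n\bv^n\otimes\bv^n))_\iij\cdot\bv^n_\iij$ is rewritten by Lemma~\ref{lem:Seite8} and pairs with the $-\tfrac12|\bv^\np_\iij|^2(\diver_h(\rho^\np\bv^\np))_\iij$ term from the mass test, while the remaining pieces are absorbed by Young into the $8\tau_n^2\sum(\diver_h(\rho^\np\bv^\np))_\iij^2$ contribution and into $|D_h\bv^n|^2$ and $|\bv^\np-\bv^n|^2$ terms. Finally, the algebraic identity \eqref{eq:stab26} supplies the kinetic energy difference together with the strictly positive term $\tfrac12\rho^n_\iij|\bv^\np_\iij-\bv^n_\iij|^2\geq\tfrac12\rho^n_{\min}|\bv^\np_\iij-\bv^n_\iij|^2$.

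The genuinely new step, and the main obstacle, is to close the estimate without retaining the semi-discrete term $\tfrac{\mu_h\tau_n}{2}|D\bv^\np-D\bv^n|^2$ on the right. Here I would follow Lemma~\ref{lem:fd:stab}: the viscous term $\mu_h\tau_n\sum(\Delta_h\bv^n)_\iij\cdot\bv^\np_\iij$ is summed by parts as in \eqref{pi:delta}, which both produces $D_h\bv^n:D_h\bv^\np$ in the interior and, because \eqref{bc:v} enforces $\bv|_{\del\Omega}=0$ only weakly, generates boundary terms on $\del\Omega$ that are collected into the final contribution $\tfrac{\tau_n}{h}\|\bv^n\|_\infty\sum_{(i,j)\in\del\Omega}|\bv^n_\iij-\bv^\np_\iij|^2$. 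The interior $D_h$-difference term is then controlled by the inverse inequality \eqref{eq:invp}, which trades $\sum_{i,j=0}^{K-1}|(D_h(\bv^\np-\bv^n))_\iij|^2$ for $\tfrac{8}{h^2}\sum|\bv^\np_\iij-\bv^n_\iij|^2$, and the result is absorbed into the positive term $\tfrac12\rho^n_{\min}\sum|\bv^\np_\iij-\bv^n_\iij|^2$. The delicate point is that the factor $h^{-2}$ introduced by \eqref{eq:invp} must be balanced against the factor $h$ hidden in the choice $\mu_h=\|\bv^n\|_\infty h$ and in the CFL bound on $\tau_n$; I expect the careful tuning of the several free parameters $\veps$ in the Young inequalities, so that the coefficients of every interior $|D_h\bv^n|^2$ and $|\bv^\np-\bv^n|^2$ term become non-positive and only the two $\mathcal{O}(\tau^2)$ terms and the single boundary term survive, to be the most technical part of the argument.
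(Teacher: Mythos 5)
Your overall architecture coincides with the paper's proof: the same pair of test functions, the convexity estimates for $U$ and $V$, the summation by parts for $\gamma(\Delta_h\rho^\np)_\iij$, the substitution of \eqref{f:rho} into the pressure--Lagrange coupling calibrated with $\kappa_V$ so that the $\tfrac{\kappa_V}{2M^2}(\rho^\np-\rho^n)^2$ terms cancel, the algebraic kinetic-energy identity, the boundary terms generated by \eqref{pi:delta}, and the inverse inequality \eqref{eq:invp} used to trade $\sum|(D_h(\bv^\np-\bv^n))_\iij|^2$ for $\tfrac{8}{h^2}\sum|\bv^\np_\iij-\bv^n_\iij|^2$ and absorb it into $\tfrac12\rho^n_{\min}\sum|\bv^\np_\iij-\bv^n_\iij|^2$ are all exactly the paper's steps.

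The one step that does not work as you describe it is the convective term. You split the test function as $\bv^\np=\bv^n+(\bv^\np-\bv^n)$, apply Lemma \ref{lem:Seite8} at time level $n$ to get $\sum\widetilde\diver_h(\rho^n\bv^n\otimes\bv^n)_\iij\cdot\bv^n_\iij=\tfrac12\sum|\bv^n_\iij|^2(\diver_h(\rho^n\bv^n))_\iij$, and claim this pairs with the term $-\tfrac12\sum|\bv^\np_\iij|^2(\diver_h(\rho^\np\bv^\np))_\iij$ from the mass test. These two quantities live at different time levels and do not cancel: the leftover $\tfrac{\tau_n}{2}\sum\big(|\bv^n_\iij|^2(\diver_h(\rho^n\bv^n))_\iij-|\bv^\np_\iij|^2(\diver_h(\rho^\np\bv^\np))_\iij\big)$ involves $(\diver_h(\rho^n\bv^n))_\iij$, which is fixed by the \emph{previous} timestep's mass balance and is not controlled by anything in the current step (nor does it vanish for generic $M$, unlike in Lemma \ref{lem:fd:stab} where the AP property gives $\diver_h(\rho_0^0\bv_0^n)=0$). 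Estimating that leftover would require second time differences of $\rho$, which the energy estimate does not provide; as it stands it is only $\mathcal{O}(\tau_n)$ per step and would accumulate to $\mathcal{O}(1)$. The paper's fix is to apply Lemma \ref{lem:Seite8} at level $n+1$ to the mass-test contribution, rewriting $-\tfrac12\sum|\bv^\np_\iij|^2(\diver_h(\rho^\np\bv^\np))_\iij=-\sum\widetilde\diver_h(\rho^\np\bv^\np\otimes\bv^\np)_\iij\cdot\bv^\np_\iij$, so that together with the momentum convective term one obtains the difference of the \emph{same} flux at consecutive time levels tested with the \emph{same} $\bv^\np$; a discrete Abel summation then shifts the spatial differences onto $\bv^\np$, and the flux differences are bounded by $|\bv^\np-\bv^n|^2$ and $(\rho^\np-\rho^n)^2=\tau_n^2(\diver_h(\rho^\np\bv^\np))^2$, which is precisely where the $8\tau_n^2$ term and the need for the stated timestep restriction come from. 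A minor further discrepancy: the positive dissipation available to absorb the convective remainders is $\tfrac{\mu_h\tau_n}{2}\sum|(D_h\bv^\np)_\iij|^2$ at level $n+1$ (obtained by completing the square in $D_h\bv^n:D_h\bv^\np$), not a $|(D_h\bv^n)_\iij|^2$ term as you suggest, since here the momentum equation is tested with $\bv^\np$ alone rather than with $\bv^\np+\bv^n$.
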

\begin{remark}[Time step restriction]
 As we discretised the advection term in the momentum balance explicitly,  a timestep restriction proportional to
$\tfrac{h}{\|\bv\|_\infty}$ is to be expected.
Concerning the possible increase in energy we have seen in \S \ref{sec:fd:lMstab} that for small Mach numbers $\nabla_h \Lambda^\np \sim M^2$ such that the
$\tfrac{1}{M^2}|(\nabla_h \Lambda^\np)_\iij|^2$ term is well behaved provided the initial data satisfy the compatibility condition.

Let us stress that the timestep restriction is independent of the Mach number $M.$
\end{remark}

\smallskip

\begin{proof}[Proof of Lemma \ref{lem:fdns}]
 This proof has the same structure as the proof of Lemma \ref{lem:sdns}. We multiply \eqref{f:rho} by 
$ \tfrac{1}{M^2} \Lambda^\np_\iij - \tfrac{1}{2} |\bv_\iij^\np|^2$
and \eqref{f:rhov} by $\bv_\iij^\np.$ Adding both equations and summing $i,j=0,\dots,K$ we find
\begin{equation}\label{fd:ns1}
 \begin{split}
0= & \sum_{i,j=0}^K \Big( (\rho^\np_\iij - \rho^n_\iij)\big(\frac{1}{M^2} \big(U'(\rho^\np_\iij) - \gamma ( \Delta_h \rho^\np)_\iij - V'(\rho^n_\iij)\big)
 - \frac{1}{2} |\bv_\iij^\np|^2\big)\\
&+ \tau_n \diver_h( \rho^\np \bv^\np)_\iij (\frac{1}{M^2} \Lambda^\np_\iij -\frac{1}{2} |\bv_\iij^\np|^2)
+\rho^\np_\iij |\bv^\np_\iij|^2  - \rho^n_\iij \bv^n_\iij \cdot \bv^\np_\iij\\
&+ \tau_n \widetilde \diver_h (\rho^\np \bv^\np \otimes \bv^\np)_\iij \cdot \bv^\np_\iij
+ \rho^n_\iij \bv^\np_\iij \frac{\tau_n}{M^2} (\nabla_h \Lambda^\np)_\iij
 - \mu_h \tau_n (\Delta_h \bv^n)_\iij \cdot \bv^\np_\iij\Big).
\end{split}
\end{equation}
As  $U,\, V$ are convex we have for $(i,j) \in \{0,\dots, K\}^2$ 
\begin{equation}\label{fd:ns2}\begin{split}
&  (\rho^\np_\iij - \rho_\iij^n) U'(\rho_\iij^\np) \geq  U(\rho^\np_\iij) -U( \rho_\iij^n),\\
&-  (\rho^\np_\iij - \rho_\iij^n) V'(\rho_\iij^n) \geq -  \big(V(\rho^\np_\iij) -V( \rho_\iij^n) 
- \frac{\kappa_V}{2}(\rho^\np_\iij - \rho_\iij^n)^2\big),\\
& (\rho^\np_\iij - \rho_\iij^n)(-\frac{1}{2} |\bv_\iij^\np|^2) +\rho^\np_\iij |\bv^\np_\iij|^2  - \rho^n_\iij \bv^n_\iij \cdot \bv^\np_\iij\\
&\qquad \qquad\qquad \qquad\qquad \qquad= 
\frac{1}{2} \rho^\np_\iij |\bv^\np_\iij|^2 - \frac{1}{2}\rho^n_\iij |\bv^n_\iij|^2 + \frac{1}{2} \rho^n_\iij |\bv^\np _\iij - \bv_\iij^n|^2
\end{split}\end{equation}
and because of \eqref{bc:rho}
\begin{multline}\label{fd:ns2b}
- \gamma  \sum_{i,j=0}^K (\rho^\np_\iij - \rho_\iij^n) (\Delta_h \rho^\np)_\iij = 
\gamma \sum_{i,j=0}^K |(\widetilde \nabla_h\rho^\np)_\iij|^2 - (\widetilde\nabla_h\rho^n)_\iij \cdot (\widetilde\nabla_h\rho^\np)_\iij\\
\geq \frac{\gamma}{2} \sum_{i,j=0}^K |(\widetilde\nabla_h\rho^\np)_\iij|^2 -|(\widetilde\nabla_h\rho^n)_\iij|^2.
\end{multline}
In addition, we find using the boundary data \eqref{bc:rho}, \eqref{bc:v}
\begin{equation}\label{fd:ns3}\begin{split}
&\Big|\frac{\tau_n}{M^2}\sum_{i,j=0}^K  \diver_h( \rho^\np \bv^\np)_\iij  \Lambda^\np_\iij +  \rho^n_\iij \bv^\np_\iij \cdot  (\nabla_h \Lambda^\np)_\iij\Big|\\
& = \Big|\frac{\tau_n}{M^2}\sum_{i,j=0}^K  (\rho^n_\iij  - \rho^\np_\iij )\bv^\np_\iij \cdot  (\nabla_h \Lambda^\np)_\iij\Big|\\
& \leq \frac{\kappa_V}{2M^2} \sum_{i,j=0}^K  (\rho^n_\iij  - \rho^\np_\iij )^2 
+ \frac{\tau_n^2}{2\kappa_V M^2 } \|\bv^\np\|_\infty^2\sum_{i,j=0}^K |(\nabla_h \Lambda^\np)_\iij|^2.
\end{split}\end{equation}
To estimate the energy production by discretisation errors in the advection terms we use Lemma \ref{lem:Seite8} and obtain
\begin{equation}\label{fd:ns4}
 \begin{split}
   & \sum_{i,j=0}^K \diver_h( \rho^\np \bv^\np)_\iij (-\frac{1}{2} |\bv_\iij^\np|^2) +  \widetilde \diver_h (\rho^n \bv^n \otimes \bv^n)_\iij \cdot \bv^\np_\iij\\
  & = - \sum_{i,j=0}^K \big(\widetilde \diver_h (\rho^\np \bv^\np \otimes \bv^\np)_\iij - \widetilde \diver_h (\rho^n \bv^n \otimes \bv^n)_\iij \big)  \cdot \bv^\np_\iij\\
& =  \frac{-1}{4h}\sum_{i,j=0}^K ( {\bf a}^\np_\iij - {\bf a}^\np_\imj - {\bf a}^n_\iij + {\bf a}^n_\imj) \cdot \bv^\np_\iij 
                                + ( {\bf b}^\np_\iij - {\bf b}^\np_\ijm - {\bf b}^n_\iij + {\bf b}^n_\ijm) \cdot \bv^\np_\iij   
 \end{split}   
\end{equation}
where
\begin{align*}
 {\bf a}^n_\iij = (\bv_\iij^n + \bv_\ipj^n)(\rho^n_\iij u^n_\iij + \rho^n_\ipj u^n_\ipj), \quad
  {\bf b}^n_\iij = (\bv_\iij^n + \bv_\ijp^n)(\rho^n_\iij w^n_\iij + \rho^n_\ijp w^n_\ijp).
\end{align*}
Thus, we obtain using \eqref{bc:v}
\begin{equation}\label{fd:ns5}
 \begin{split}
   &\left| \sum_{i,j=0}^K \diver_h( \rho^\np \bv^\np)_\iij (-\frac{1}{2} |\bv_\iij^\np|^2) 
+  \widetilde \diver_h (\rho^n \bv^n \otimes \bv^n)_\iij \cdot \bv^\np_\iij\right|\\
& \leq \frac{1}{4h}  \sum_{i,j=0}^{K-1} | {\bf a}^\np_\iij - {\bf a}^n_\iij | \cdot |\bv^\np_\ipj -\bv_\iij^\np|
                                + | {\bf b}^\np_\iij - {\bf b}^n_\iij | \cdot |\bv^\np_\ijp -\bv_\iij^\np|\\
& = \frac{1}{8\mu_h} \sum_{i,j=0}^{K-1} \big(| {\bf a}^\np_\iij - {\bf a}^n_\iij |^2 + | {\bf b}^\np_\iij - {\bf b}^n_\iij |^2\big) +  \frac{\mu_h}{2}\sum_{i,j=0}^{K-1} |(D_h \bv^\np)_\iij|^2,
 \end{split}   
\end{equation}
A straightforward calculation gives
\begin{multline}\label{fd:ns6}
 \sum_{i,j=0}^{K-1} | {\bf a}^\np_\iij - {\bf a}^n_\iij |^2  \leq 
32  \|\rho^\np\|_\infty^2 (\| \bv^\np \|_\infty + \|\bv^n\|_\infty)^2  \sum_{i,j=0}^K | \bv^\np_\iij - \bv_\iij^n|^2\\
+ 32 \sum_{i,j=0}^K \|\bv^n\|_\infty^2 (\rho^\np_\iij -\rho^n_\iij)^2
\end{multline}
and an analogous estimate for $\sum_{i,j=0}^{K-1} | {\bf b}^\np_\iij - {\bf b}^n_\iij |^2.$
Inserting \eqref{fd:ns6} into \eqref{fd:ns5} we find
\begin{equation}\label{fd:ns7}
 \begin{split}
   &\left| \sum_{i,j=0}^K \diver_h( \rho^\np \bv^\np)_\iij (-\frac{1}{2} |\bv_\iij^\np|^2) 
+  \widetilde \diver_h (\rho^n \bv^n \otimes \bv^n)_\iij \cdot \bv^\np_\iij\right|\\
&\leq \frac{8}{\mu_h}  \|\rho^\np\|_\infty^2 (\| \bv^\np \|_\infty + \|\bv^n\|_\infty)^2  \sum_{i,j=0}^K | \bv^\np_\iij - \bv_\iij^n|^2\\
&+  \frac{8}{\mu_h}  \sum_{i,j=0}^K \|\bv^n\|_\infty^2 (\rho^\np_\iij -\rho^n_\iij)^2 +  \frac{\mu_h}{2} \sum_{i,j=0}^{K-1} |(D_h \bv^\np)_\iij|^2.
 \end{split}   
\end{equation}
Let us finally consider the artificial dissipation. We find using \eqref{pi:delta}
\begin{equation}\label{fd:ns8}
 \begin{split}
& - \sum_{i,j=0}^{K-1} (\Delta_h \bv^n)_\iij \cdot \bv^\np_\iij\\
   &\geq   \sum_{i,j=0}^{K-1} (D_h\bv^\np)_\iij : (D_h \bv^n)_\iij - \frac{1}{h^2} \sum_{(i,j)\in \del\Omega} |\bv^n_\iij- \bv^\np_\iij |^2\\
 &= \sum_{i,j=0}^{K-1}\Big( |(D_h\bv^\np)_\iij|^2 -   (D_h\bv^\np)_\iij : (D_h (\bv^\np-\bv^n))_\iij \Big)
- \frac{1}{h^2} \sum_{(i,j)\in \del\Omega} |\bv^n_\iij- \bv^\np_\iij |^2\\
&\geq \frac{1}{2} \sum_{i,j=0}^{K-1} |(D_h\bv^\np)_\iij|^2 - \frac{4}{h^2} \sum_{i,j=0}^K |\bv_\iij^\np-\bv_\iij^n|^2
-\frac{1}{h^2} \sum_{(i,j)\in \del\Omega} |\bv^n_\iij- \bv^\np_\iij |^2
 \end{split}   
\end{equation}
due to the inverse inequality \eqref{eq:invp}.
Inserting \eqref{fd:ns2} -- \eqref{fd:ns8} into \eqref{fd:ns1} we find
\begin{equation}\label{fd:ns9}\begin{split}
 0 \geq& \sum_{i,j=0}^K \frac{1}{M^2} \Big( W(\rho^\np_\iij) - W(\rho^n_\iij) +\frac{\gamma}{2} |(\widetilde\nabla_h \rho^\np)_\iij|^2
-\frac{\gamma}{2} |(\widetilde\nabla_h \rho^n)_\iij|^2 \Big)\\
& + \frac{1}{2} \rho^\np_\iij |\bv^\np_\iij|^2 - \frac{1}{2} \rho^n_\iij |\bv^n_\iij|^2\\
&- \frac{8\tau_n}{h}\|\bv^n\|_\infty\sum_{i,j=0}^K (\rho^\np_\iij - \rho^n_\iij)^2 
- \frac{\tau_n^2}{2\kappa_V M^2} \|\bv^\np\|_\infty^2 \sum_{i,j=0}^K | (\nabla_h \Lambda^\np)_\iij|^2\\
&+ \big( \frac{\rho^n_{\min}}{2} - 8\tau_n\frac{\|\rho^\np\|_\infty^2(\|\bv^n\|_\infty +\|\bv^\np\|_\infty)^2}{h \|\bv^n\|_\infty} 
- 4 \tau_n \frac{\|\bv^n\|_\infty}{h}   \big)
 \sum_{i,j=0}^{K}|\bv^\np_\iij - \bv^n_\iij|^2\\
&-\frac{\mu_h \tau_n}{h^2} \sum_{(i,j)\in \del \Omega} |\bv^n_\iij- \bv^\np_\iij |^2.
\end{split}\end{equation}
The assertion of the lemma follows from \eqref{fd:ns9} because of the  assumption on $\tau_n$ and \eqref{f:rho}.
\end{proof}

\section{Numerical experiments}
In this section we present  numerical experiments validating the desirable properties of the scheme described above.
In particular, we investigate the stability for generic and low Mach numbers and we compute the experimental order of convergence (EOC) 
for some examples.
The scheme was implemented in 1D and  2D using Matlab. The nonlinear systems were solved using the 'fsolve' command with default precision $10^{-7},$
if not stated otherwise.

\subsection{Stability for order $1$ Mach numbers}\label{sub:m1}
We consider the scheme \eqref{f:rho}-\eqref{bc:v} on the unit square $[0,1]^2$ and choose
\[ W(\rho)=(\rho-1)^2(\rho-2)^2 = (\rho^4 + 13\rho^2 + 4) - (6\rho^3 +12\rho)=:U(\rho)-V(\rho).\]
We consider the following initial datum
\[ \rho^0_\iij = \left\{\begin{array}{ccc}
                         3 &:& | \frac{i}{K} - \frac{1}{4}| + | \frac{j}{K} - \frac{1}{4}| \leq \frac{1}{4}\\
                         2 &:& | \frac{i}{K} - \frac{3}{4}| + | \frac{j}{K} - \frac{3}{4}| \leq \frac{1}{4}\\
                         1 &:& \text{else}
                        \end{array}
  \right.,  \qquad \bv_\iij^0= {\bf 0},\]
which is not near equilibrium.
The parameters are $M=1,$ $\gamma=9\cdot 10^{-4},$ $h=2.5\cdot10^{-2}$, $\tau=5\cdot10^{-4}$, i.e., we use a uniform timestep.
We show total energy and mass over time in Figure \ref{fig:m1}. The energy decreases (non-monotonically) and mass is conserved up to errors in the nonlinear solver. Snapshots of the solution are displayed in Figure \ref{fig:m1snap}.

 \begin{figure}[h]
 \includegraphics[width=.7\textwidth]{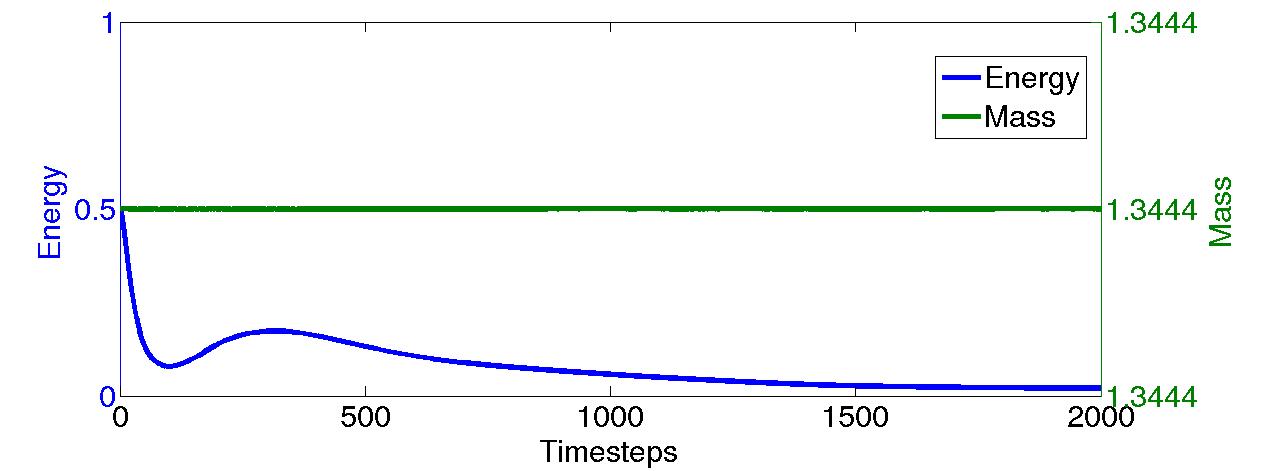}
  \caption{Energy and mass over time for the example in \S \ref{sub:m1}.}
\label{fig:m1}
 \end{figure}
 \begin{figure}[h]
\begin{center}
            \subfigure[][$t=0$]{
              \includegraphics[width=0.4\textwidth]
                              {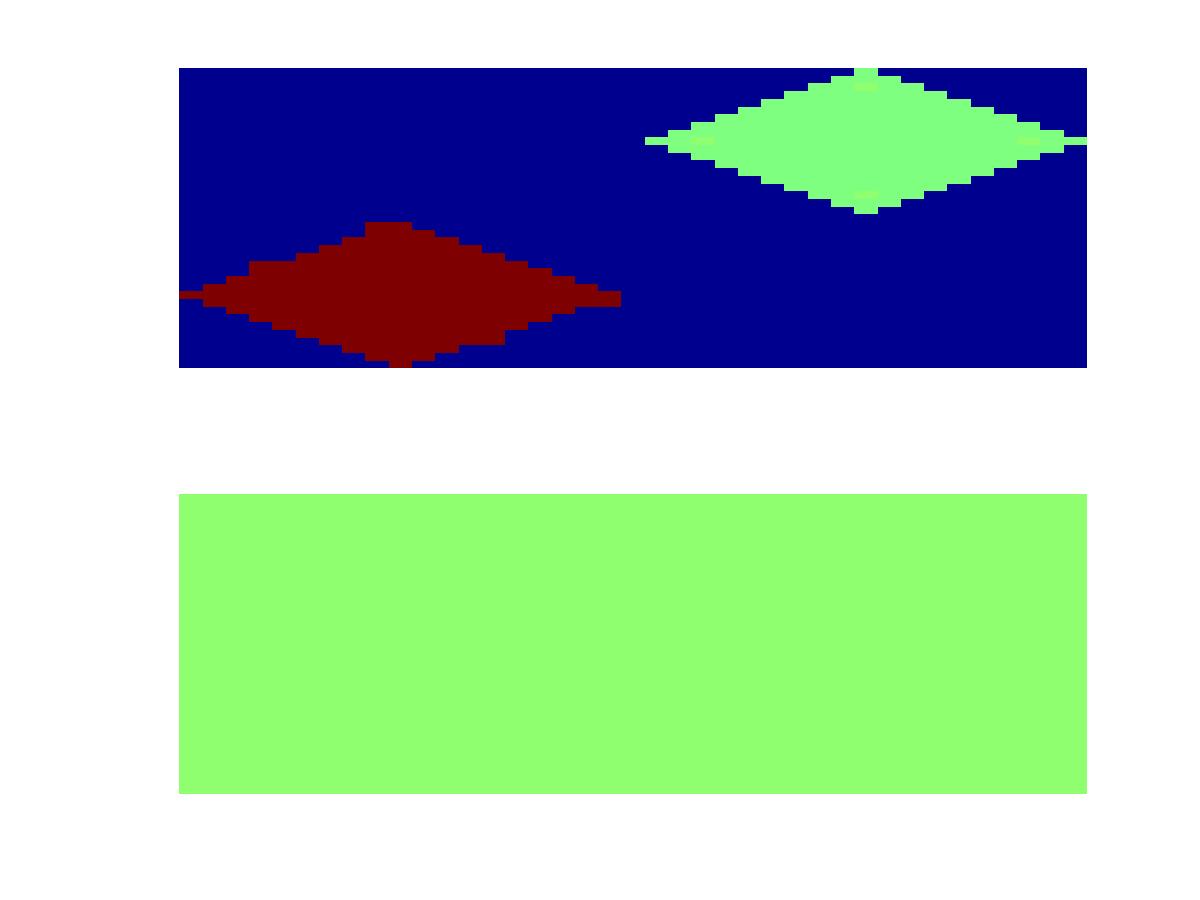}
            }
            \hfill
            \subfigure[][$t=0.05$]{
              \includegraphics[width=0.4\textwidth]
                              {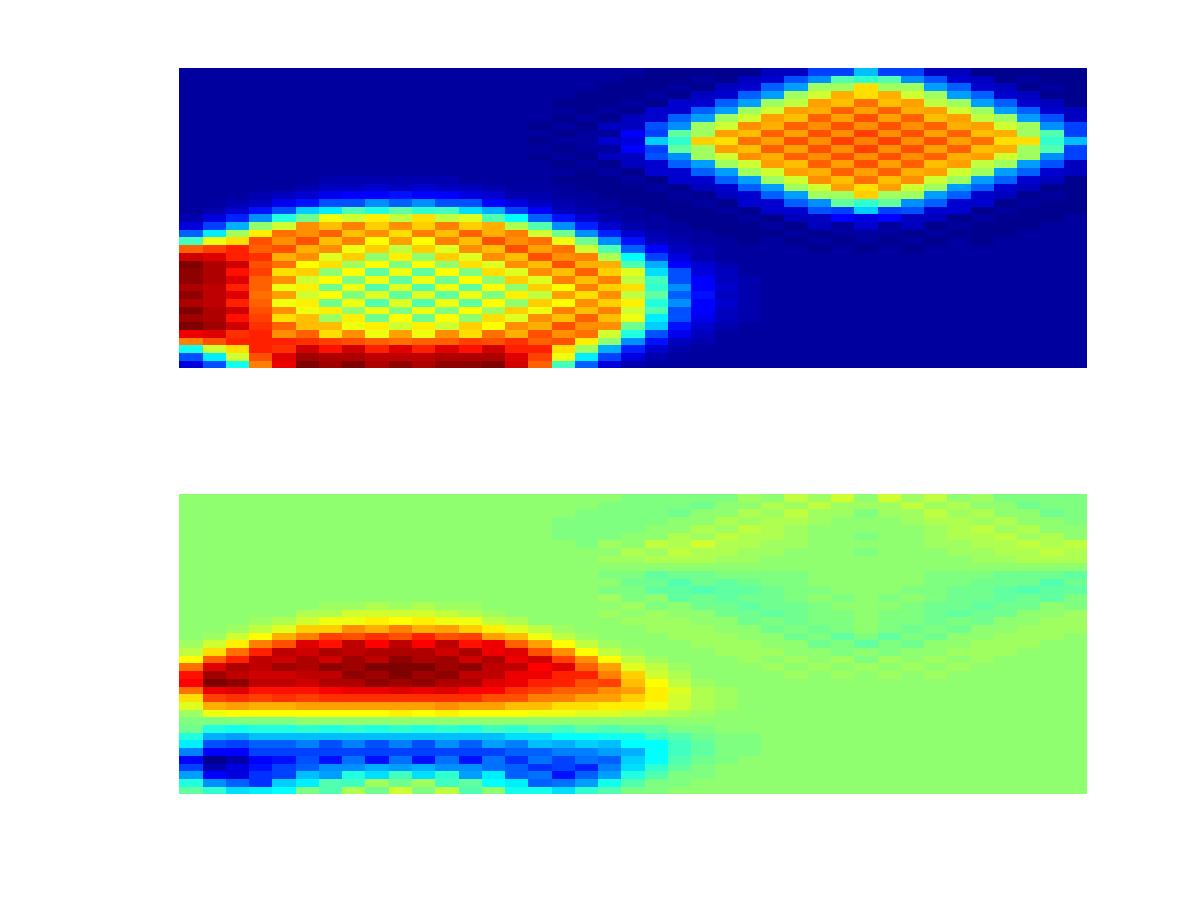}
}
            \subfigure[][$t=0.25$]{
              \includegraphics[width=0.4\textwidth]
                              {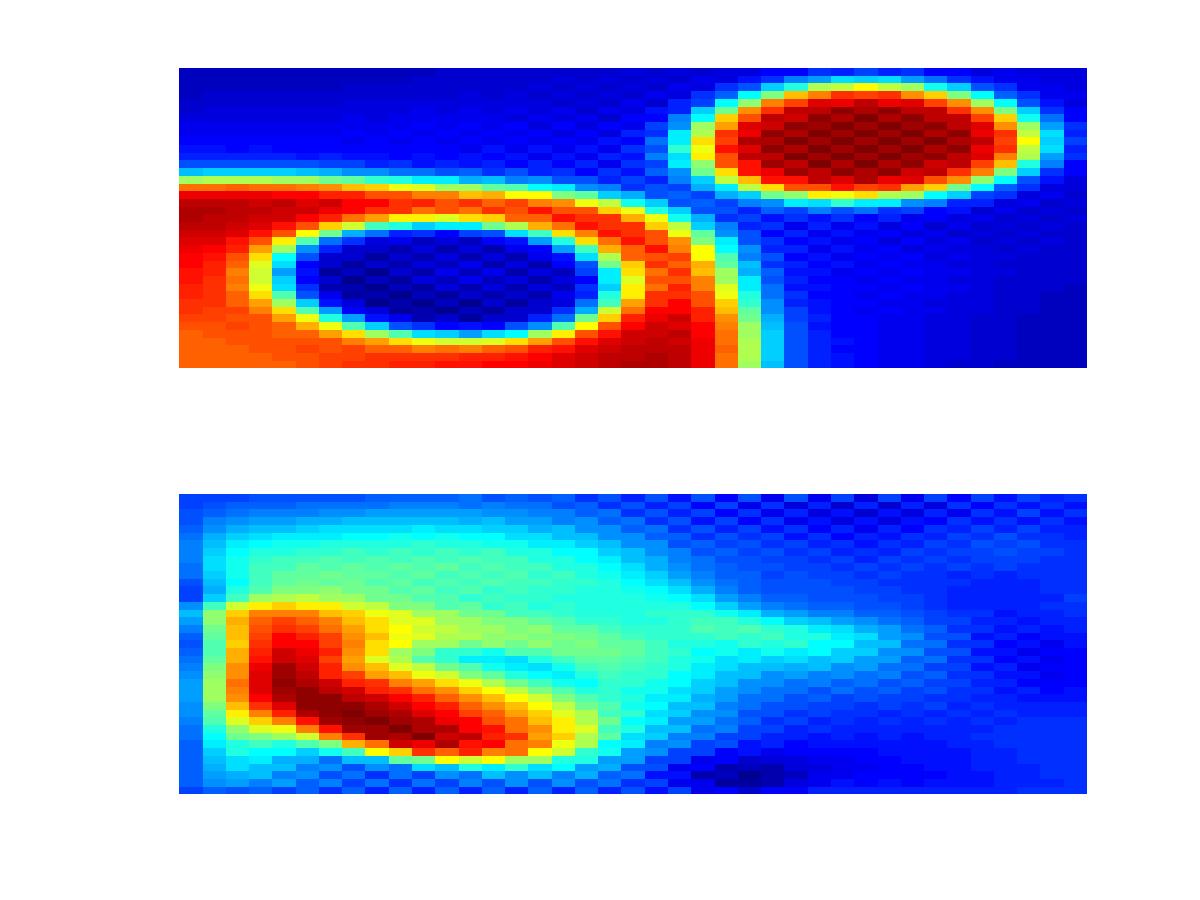}
            }
            \hfill
            \subfigure[][$t=1$]{
              \includegraphics[width=0.4\textwidth]
                              {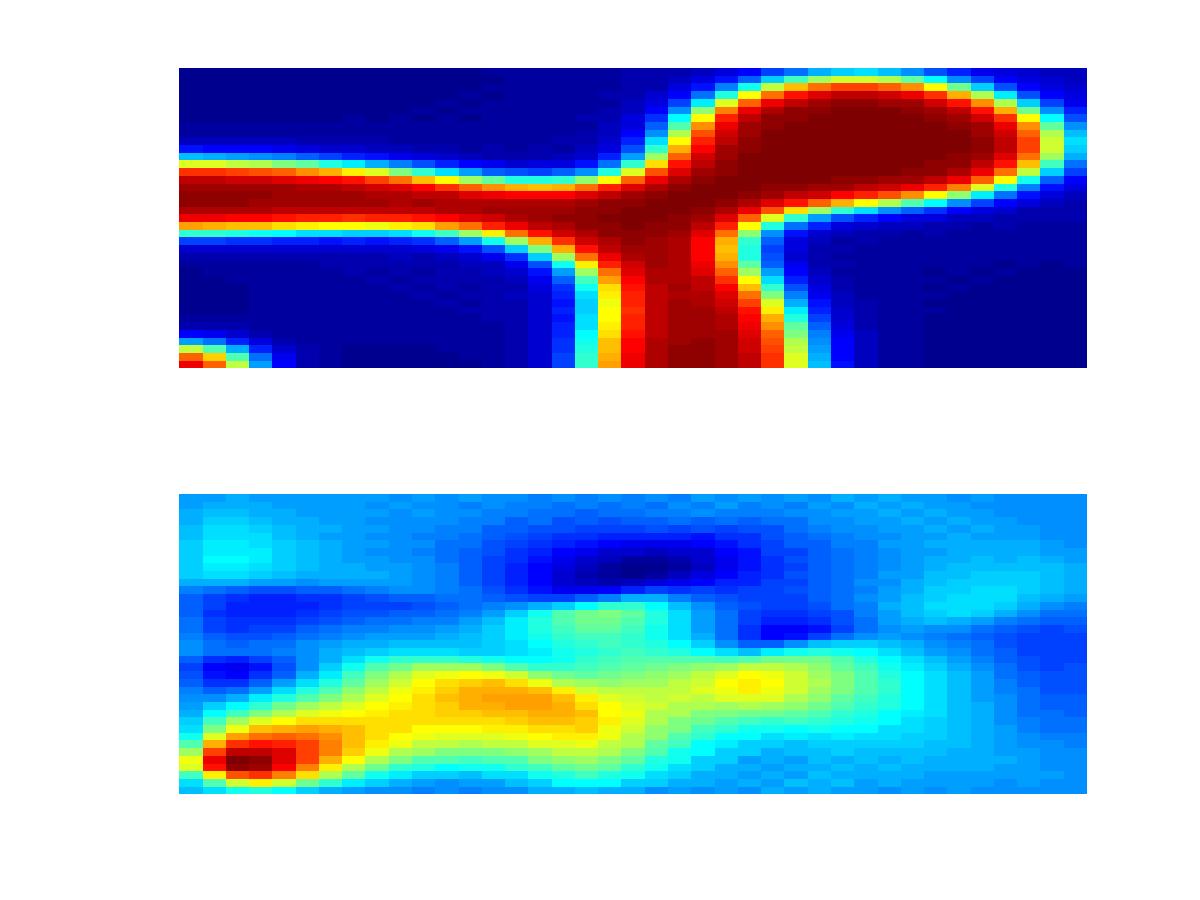}
}
          \end{center}
  \caption{Example described in \S \ref{sub:m1}. Snapshots of density (above) and horizontal velocity (below) at different times.
Red indicates high, green medium and blue low values.}
\label{fig:m1snap}
\end{figure}

\subsection{Stability for small Mach numbers}\label{sub:Mseq}
In this section we study a sequence of Mach numbers and initial densities approaching equilibrium for $M \rightarrow 0.$
We consider Mach numbers $M \in \{ 10^{-1}, 10^{-2}, 10^{-3}\}$ and initial data
\begin{equation}\begin{split}
 \rho_\iij^0 &= \frac{3}{2} - \big(\frac{1}{2} +4M\big)\cdot
                    \tanh\Big(\sqrt{\frac{2}{\gamma}}  \Big( \sqrt{\Big( \frac{i}{N} - \frac{1}{2} \Big)^2 + \Big( \frac{j}{N} - \frac{1}{2} \Big)^2 } - \frac{1}{4}  \Big)\Big)\\
   \bv_\iij^0 &= {\bf 0}
\end{split}\end{equation}
depending on the Mach number.
The other parameters are as in \S \ref{sub:m1}.
We  display the behaviour of (total) energy and kinetic energy over time in Figure \ref{fig:lm}.
The (total) energies are normalised by setting the energy at time zero to be one. This is done due to the fact that the initial energies differ and we are not interested
in absolute values of the energy but in its change in time.
The (total) energy is non-monotone in all three regimes. Still, its changes are rather small such that the schemes can be viewed as being stable.
Note that there is strong dissipation in case $M=.1$ while the energy increases above its initial value for the other two choices of $M.$ Initially the kinetic energy increases strongly (from zero) in all three regimes.
After the first at most 30 timesteps the kinetic energy decreases monotonically for all three choices of $M.$
In all three plots the lines for $M=10^{-2}$ and $M=10^{-3}$ are nearly identical.
In agreement with our analytic results we have better control of the kinetic energy for smaller Mach numbers.
 \begin{figure}[h]
 \includegraphics[width=.33\textwidth]{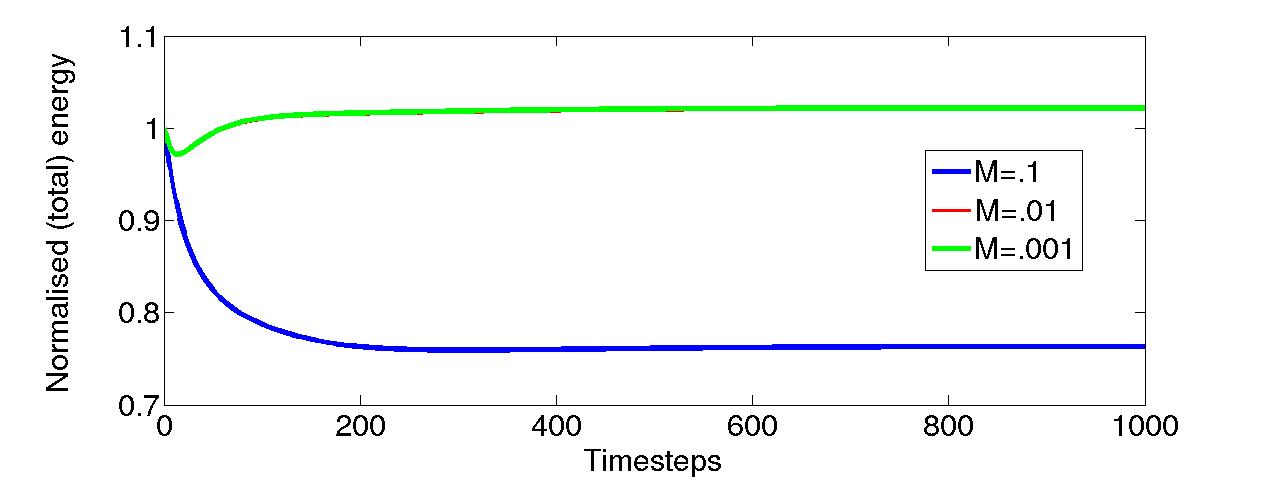}
  \includegraphics[width=.33\textwidth]{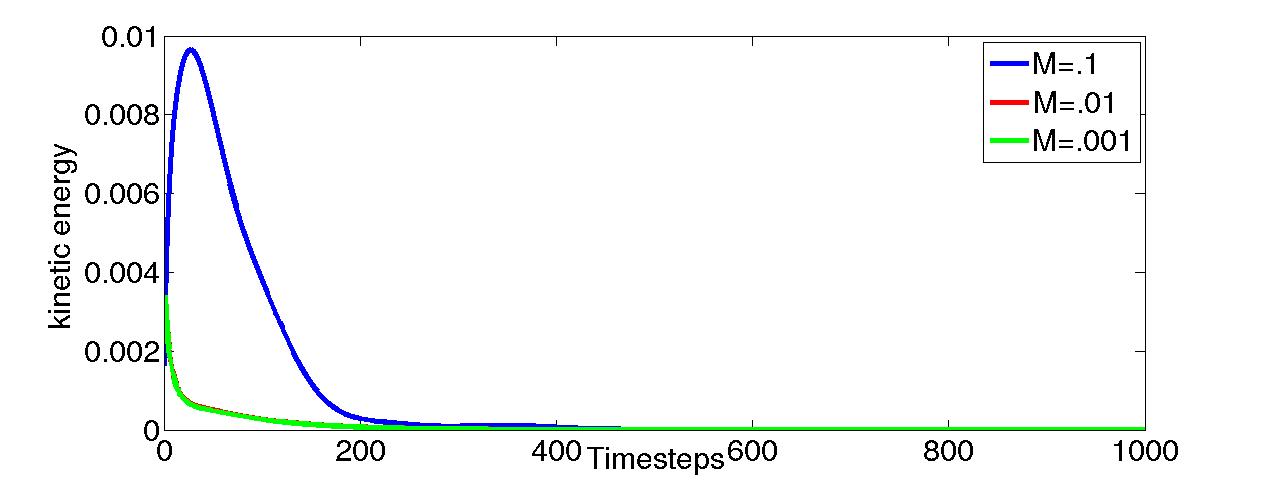}
 \includegraphics[width=.33\textwidth]{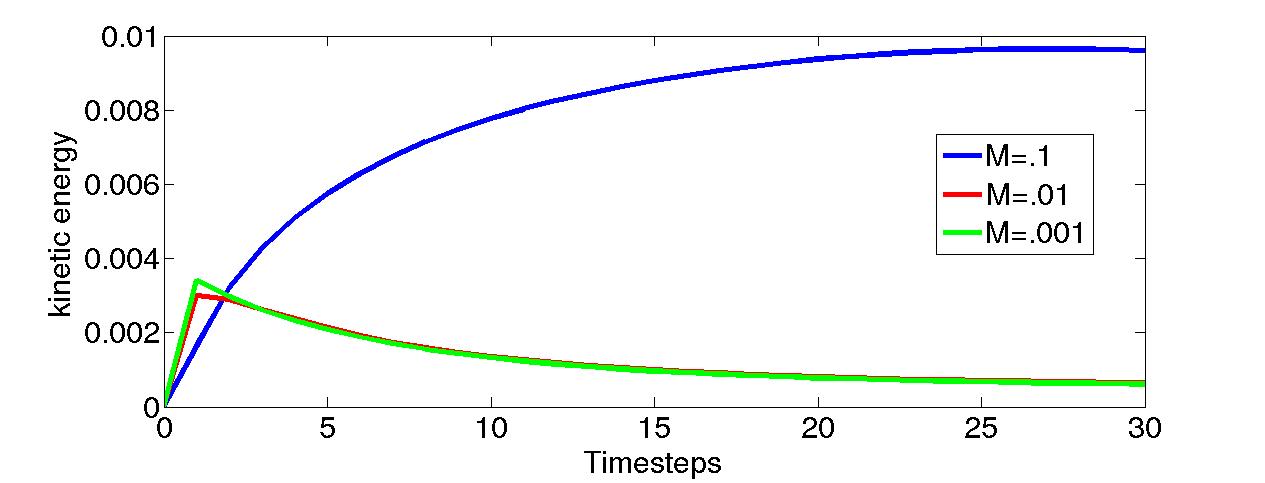}
  \caption{Left: Normalised total energy for different Mach numbers for the example described in \S \ref{sub:Mseq}. Middle: Kinetic energy for different Mach numbers for the example described in \S \ref{sub:Mseq}. Right: Kinetic energy for the first 30 timesteps for different Mach numbers for the example described in \S \ref{sub:Mseq}.}
\label{fig:lm}
 \end{figure}

\subsection{Convergence for order $1$ Mach numbers}\label{sub:cgm}
In this section we study the convergence properties of the scheme in 1D in a situation which is far away from equilibrium.
We consider the interval $[-1,1]$ as our computational domain and choose 
\begin{equation}
\bar \rho (x) = 1.5 + \tanh(\frac{2}{\sqrt{\gamma}}x),\qquad
\bar v(x)= 0
\end{equation}
with $\gamma=10^{-3}, M=1$ and $\tau= h/100.$

As can be seen from Figure \ref{dynM}  the dispersive nature of the problem and the fact that we are far away from equilibrium lead to
small oscillations near the interface, while the energy of the system decreases over time due to our discretisation.
This oscillatory behaviour of the solution leads to suboptimal convergence rates, see Table \ref{tab:dyn}.
There we show the relative $L^2$ errors of density $e_\rho^{\textrm{rel}}$ and velocity $e_v^{\textrm{rel}}$ at time $t=.0125$ for a given number of cells $K$ as well as the 
corresponding experimental order of convergence (EOC).
The errors are computed by comparison to a numerical solution on a mesh with $2560$ cells.

\begin{figure}[h]
\includegraphics[width=.29\textwidth]{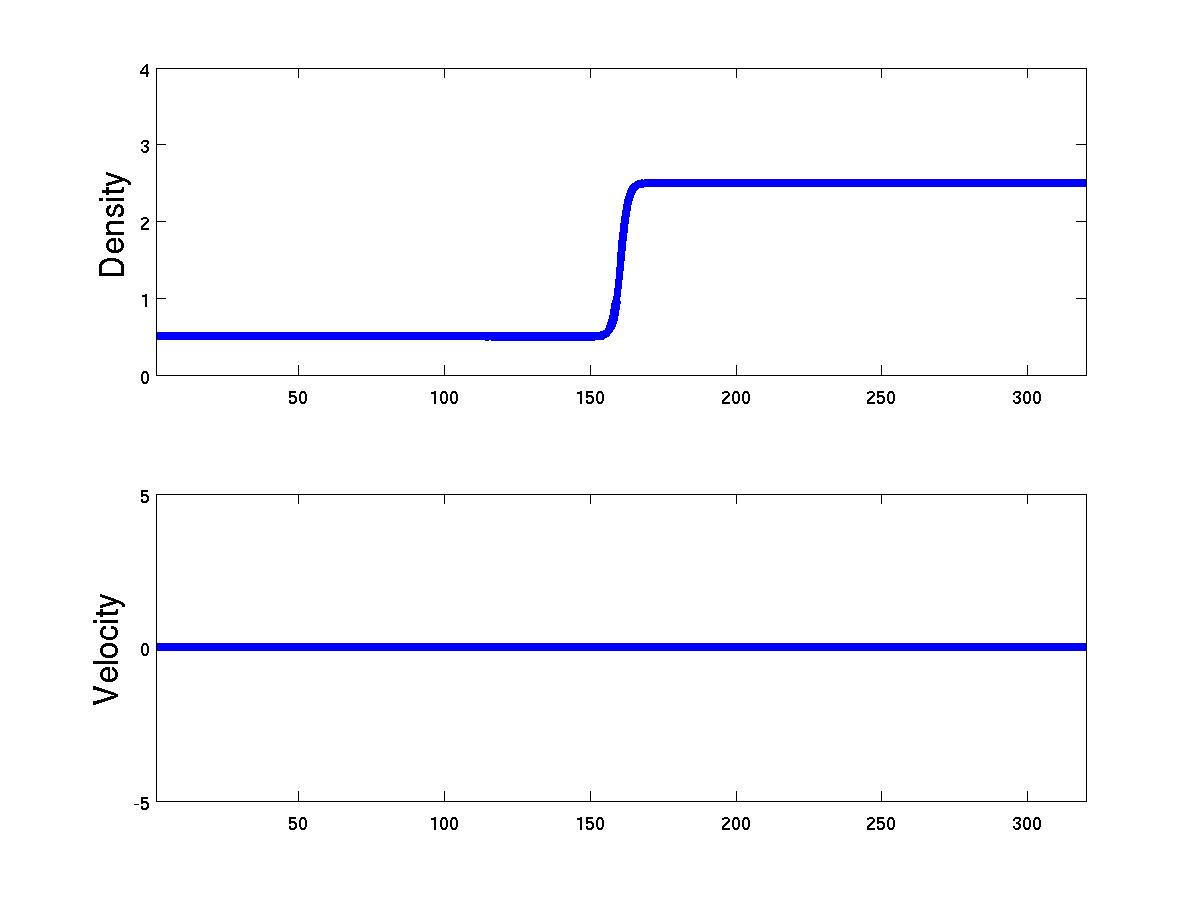}\includegraphics[width=.29\textwidth]{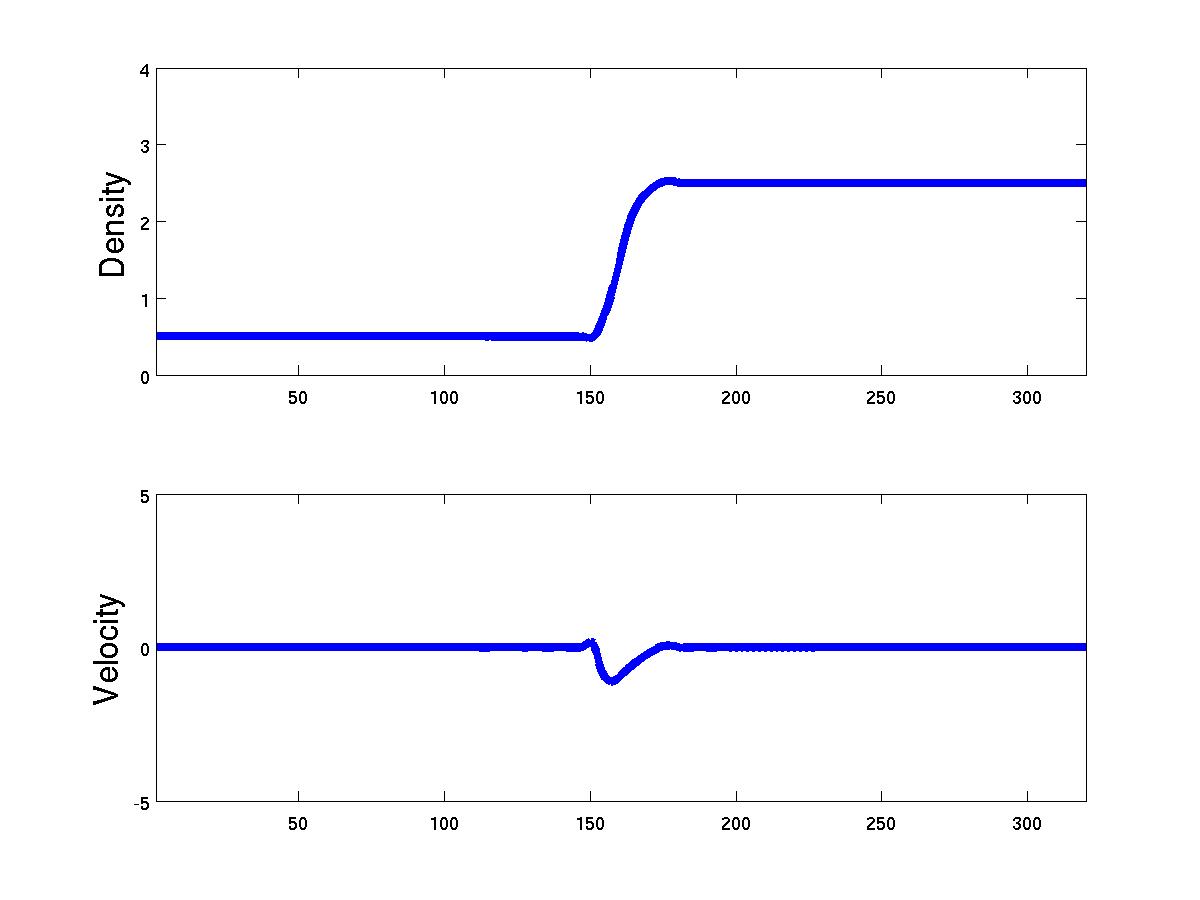}
\includegraphics[width=.4\textwidth]{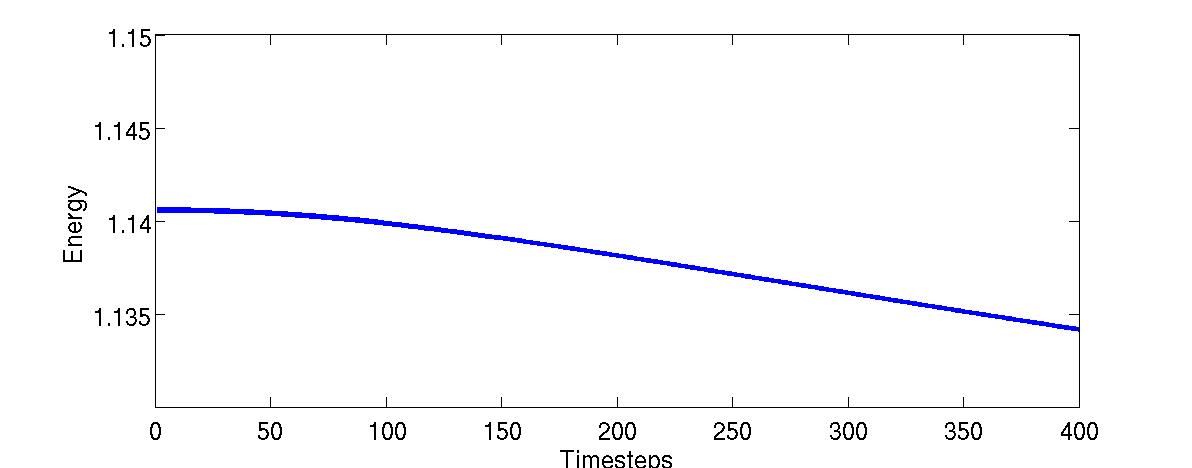}
\caption{
 Test from \S \ref{sub:cgm} with $K=320$: Left: Initial density and velocity. Middle: Density and velocity after 400 timesteps. Right: Energy over time.
}\label{dynM}
\end{figure}

\begin{table}[h]
\caption{Relative errors and EOCs for the test described in \S \ref{sub:cgm}. The dispersive structure of the problem leads to suboptimal convergence rates.}
\begin{tabular}{c|c|c|c|c}
 $K$ & $e_\rho^{\textrm{rel}}$ & EOC & $e_v^{\textrm{rel}}$ & EOC \\ \hline
 40 &    $5.545 \cdot 10^{-2}$ & -- & $3.167 \cdot 10^{-1}$ & --
 \\
 80 &    $2.806 \cdot 10^{-2}$ & $.98$ & $4.877 \cdot 10^{-1}$ & $.15$
 \\
160 &    $1.120 \cdot 10^{-2}$ & $1.3$ & $4.380 \cdot 10^{-1}$ & $.48$
 \\
320 &    $5.602 \cdot 10^{-3}$ & $1.0$ & $2.046\cdot 10^{-1}$ & $.62$
 \\
640 &    $2.713 \cdot 10^{-3}$ & $1.0$ & $1.096 \cdot 10^{-1}$ & $.90$
 \\
1280 &    $1.121 \cdot 10^{-3}$ & $1.3$ & $4.336 \cdot 10^{-2}$ & $1.3$
 \end{tabular}\label{tab:dyn} 
\end{table}
%

\subsection{Convergence for small Mach numbers}\label{sub:csm}
In this section we consider $M=.05$ and compare numerical solutions to a nearly  exact stationary solution which is given by
\begin{equation}\label{stationary}
\rho(x,t)= \frac{3}{2} + \frac{1}{2} \tanh\Big( \frac{x}{\sqrt{2\gamma}} \Big),\quad v(x,t)=0.
\end{equation}
It solves the PDE exactly and the error in the boundary conditions is negligible.
Initial conditions for the simulation are given by a pointwise evaluation of \eqref{stationary}.
We choose the timestep size as $\tau =\tfrac{h}{5}$  and the error tolerance of the nonlinear solver is set to $10^{-11}.$

The \emph{absolute} $L^2$ errors  at time $t=.25$ of density $e^{\textrm{abs}}_\rho$ and velocity $e^{\textrm{abs}}_v$ are displayed in Table \ref{tab:lm}. As the exact velocity is zero, it is not meaningful to consider relative errors here.
We observe that the density error converges very well, with a rather uniform convergence rate of $1$.
For the velocity error we observe good convergence except for two fine meshes where errors from the linear solver are amplified and lead to an increase in overall error
-- which is still rather small.
For larger error tolerance of the nonlinear solver the velocity error already starts increasing at larger meshwidth.

\begin{table}[h]
\caption{Absolute errors and EOCs for  the test described in \S \ref{sub:csm}. Round off errors in the nonlinear solver lead to negative convergence rates for
the velocity for fine meshes.}
\begin{tabular}{c|c|c|c|c}
 $K$ & $e^{\textrm{abs}}_\rho$ & EOC & $e_v^{\textrm{abs}}$ & EOC \\ \hline
 40 &    $4.314 \cdot 10^{-2}$ & -- & $3.403 \cdot 10^{-3}$ & --
 \\
 80 &    $1.997 \cdot 10^{-2}$ & $1.1$ & $3.850 \cdot 10^{-5}$ & $6.5$
 \\
160 &    $9.864 \cdot 10^{-3}$ & $1.0$ & $2.753 \cdot 10^{-6}$ & $3.8$
 \\
320 &    $4.891\cdot 10^{-3}$ & $1.0$ & $1.397 \cdot 10^{-6}$ & $1.0$
 \\
640 &    $2.385 \cdot 10^{-3}$ & $1.0$ & $2.567 \cdot 10^{-6}$ & $-0.89$
 \\
1280 &    $1.067 \cdot 10^{-3}$ & $1.2$ & $2.714 \cdot 10^{-6}$ & $-0.80$
\\
2560 &    $2.516 \cdot 10^{-5}$ & $5.4$ & $1.851 \cdot 10^{-6}$ & $0.55$
 \end{tabular}\label{tab:lm} 
\end{table}

\subsection{Linearised equation in timestep}\label{sub:lin}
In this last test we change the algorithm, in that we replace
$U'(\rho^\np) $ in the equation for $\rho^\np$ \eqref{f:rho} by $U'(\rho^n) + U''(\rho^n) (\rho^\np - \rho^n), $
i.e., we replace \eqref{f:Lambdaalt} by 
\begin{equation}\label{f:Lambdamod}
	\Lambda^\np_\iij - U'(\rho_\iij^n) - U''(\rho^n_\iij) (\rho^\np_\iij - \rho^n_\iij)+ V'(\rho_\iij^n) + \gamma( \Delta_h \rho^\np)_\iij =0.
\end{equation}
Thus, 
we only need to solve a linear problem in every timestep in order to determine $\rho^\np.$
While our analysis does not cover this modified algorithm, it leads to a considerable speedup in the computations.
We use the same data as in \S \ref{sub:cgm}.

The relative $L^2$ errors of density $e_\rho^{\textrm{rel}}$ and velocity $e_v^{\textrm{rel}}$ at time $t=.0125$ for a given number of cells $K$ as well as the 
corresponding experimental orders of convergence (EOC) are shown in Table \ref{tab:lin}.
The errors are computed by comparison to a numerical solution on a mesh with $2560$ cells.
Qualitatively the convergence properties look similar as but are less good than those in \S \ref{sub:cgm}.
This can be attributed to additional errors introduced by the linearisation.
In Figure \ref{fig:lin} we display snapshots of the solution after 200 and 400 timesteps and plot total energy over time, both for the case $K=320.$
We note that due to the linearisation of the convex part of the energy the energy of the numerical solutions is no longer decreasing. However,
the observed increase in energy is rather small.

\begin{table}[h]
\caption{Relative errors and EOCs for the test described in \S \ref{sub:lin}. The dispersive structure of the problems leads to suboptimal convergence rates.}
\begin{tabular}{c|c|c|c|c}
 $K$ & $e_\rho^{\textrm{rel}}$ & EOC & $e_v^{\textrm{rel}}$ & EOC \\ \hline
 40 &    $6.066 \cdot 10^{-2}$ & -- & $6.901 \cdot 10^{-1}$ & --
 \\
 80 &    $2.575 \cdot 10^{-2}$ & $1.2$ & $5.800 \cdot 10^{-1}$ & $.25$
 \\
160 &    $1.580 \cdot 10^{-2}$ & $.70$ & $4.820 \cdot 10^{-1}$ & $.27$
 \\
320 &    $1.056 \cdot 10^{-2}$ & $.58$ & $3.667 \cdot 10^{-1}$ & $.39$
 \\
640 &    $6.049 \cdot 10^{-3}$ & $.80$ & $2.296 \cdot 10^{-1}$ & $.68$
 \\
1280 &    $2.601 \cdot 10^{-3}$ & $1.2$ & $1.046 \cdot 10^{-1}$ & $1.1$
 \end{tabular}\label{tab:lin} 
\end{table}

\begin{figure}[h]
\includegraphics[width=.29\textwidth]{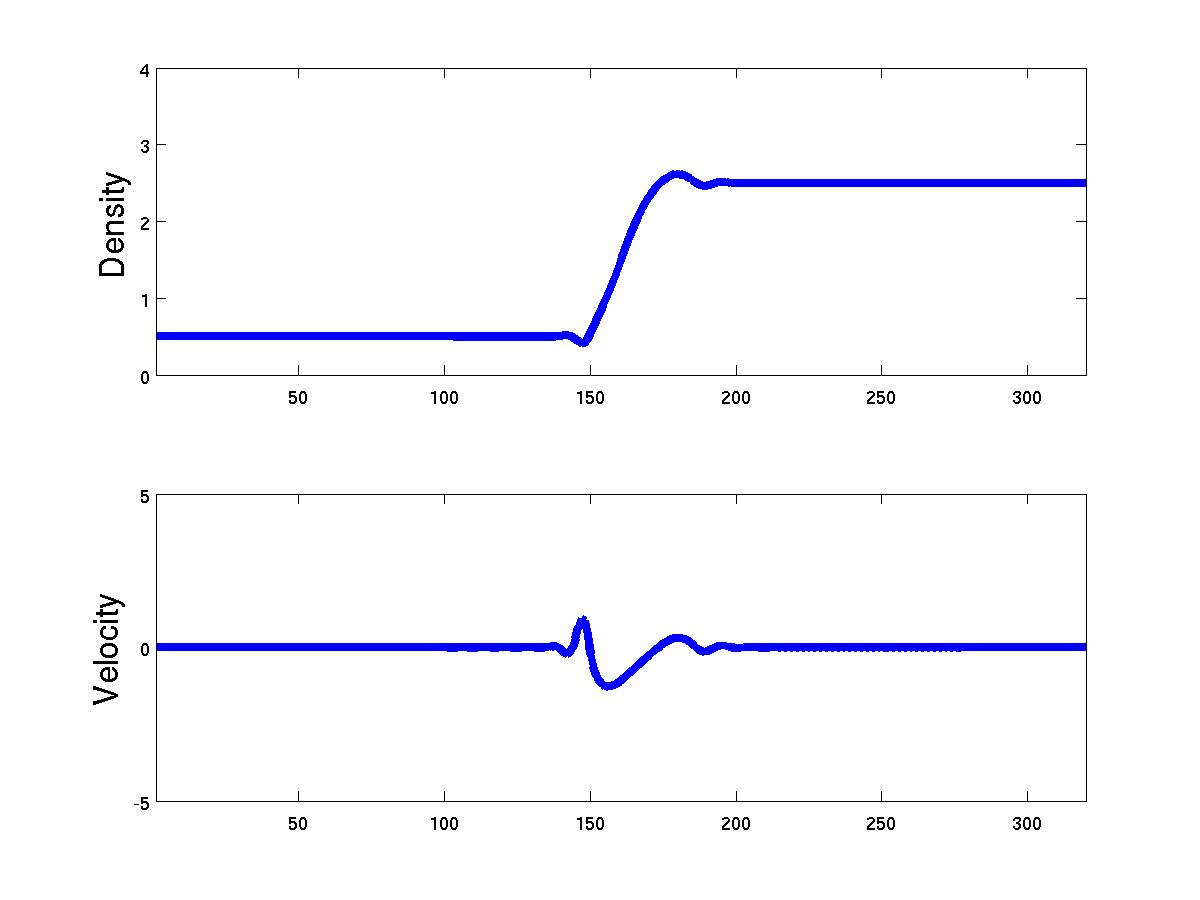}\includegraphics[width=.29\textwidth]{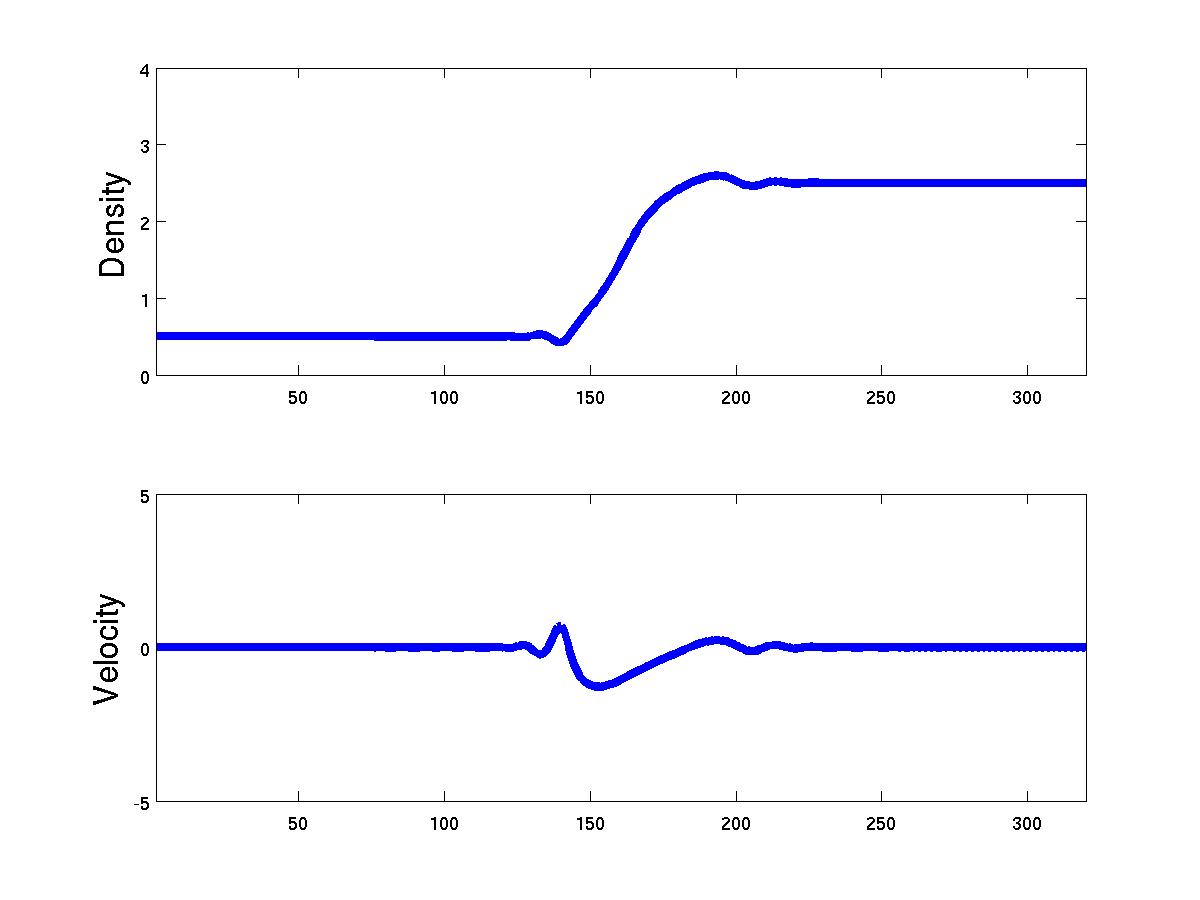}
\includegraphics[width=.4\textwidth]{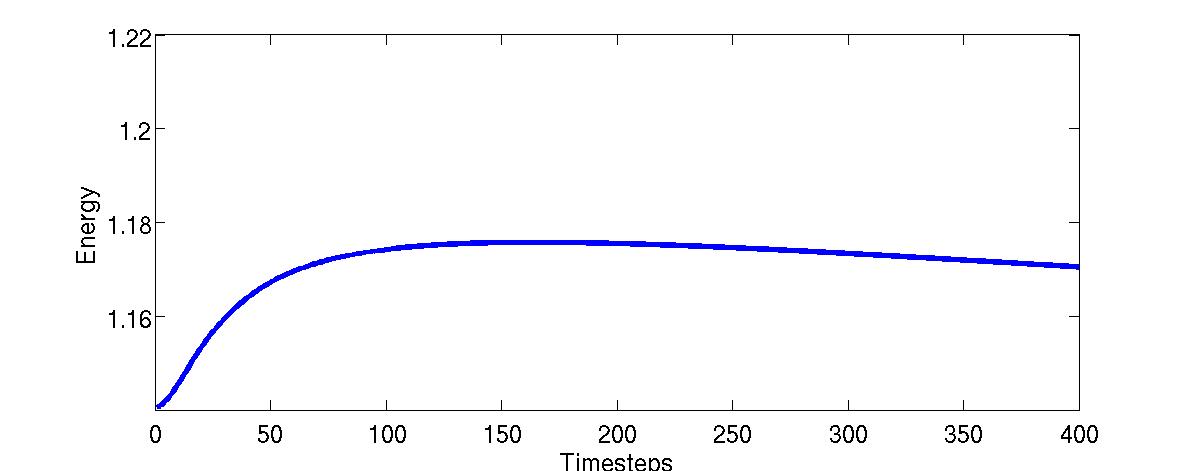}
\caption{
 Test from \S \ref{sub:lin} with $K=320$: Left: Density and velocity after 200 timesteps. Middle: Density and velocity after 400 timesteps. Right: Energy over time.
}\label{fig:lin}
\end{figure}

\bibliographystyle{plain}
\bibliography{nskbib}
\end{document}